\documentclass[12pt,a4paper]{article}
\usepackage[left=1in,right=1in,top=1in,bottom=1in]{geometry}
\usepackage{graphicx}
\usepackage{amsmath}
\usepackage{amsthm,amssymb}
\usepackage{amsfonts}
\usepackage{verbatim}

\newcommand{\R}{\mathcal{R}} 
\newcommand{\N}{\mathcal{N}} 
\newcommand{\NR}{\bar{\mathcal{R}}} 
\newcommand{\G}{\mathcal{G}} 
\newcommand{\M}{\mathcal{M}} 
\newcommand{\I}{\mathcal{I}}
\newcommand{\NI}{\mathcal{NI}}
            
\newcommand{\nil}{\varnothing} 

\newtheorem{lemma}{Lemma}
\newtheorem{theorem}{Theorem}

\newtheorem{acknowledgement}{Acknowledgements}

   \newtheorem{cor}[theorem]{Corollary} 
     
\begin{document} 
%\titl{%n the structure of the set of generators $\pmod{p}$}
\title{On a cyclic structure of generators modulo primes} 
%orm $2^iq_1^{j_1}q_2^{j_2}+1$}
\author{Srikanth ch \thanks{Address: sricheru1214@gmail.com}, 
Shivarajkumar \thanks{Address: shivarajmacs@nie.ac.in} }  
\date{}
\date{Department of Mathematics,\\ National Institute of Engineering Mysore, 570008}
\vspace{2em}
\maketitle
\begin{abstract}
\noindent In this paper, we introduce a new notion called the
\textit{set of missing generators} $\mathcal{M}(g)$ for a generator 
(or primitive
element) $g$ of the cyclic group $\mathbb{Z}_p^*$, where
$p$ is an odd prime. The cardinality of
$\mathcal{M}(g)$ is established for all odd primes $p$. For primes $p$ of the form $2^iq_1^{j_1}q_2^{j_2}+1$, the collection $V_p = \{ \mathcal{M}(g):g\in \G \}$ forms an equinumerous partition of 
$\G$ (the set of all generators of  $\mathbb{Z}_p^*$), and a digraph defined on the vertex set 
$V_p$ is a disjoint collection of unicycles of the same size. 
Thus, for every such prime, an unique triplet $(c,n,e)$ of 
integers, describing the structure of the digraph of
missing generators, can be associated. With the
help of cyclic structure, we present a macroscopic additive 
property of generators of $\mathbb{Z}_p^*$. Further, we show that 
factoring RSA numbers is computationally equivalent to  
computing $T(p)$, under the assumption that there exists an
absolute constant $k$ such that  
the set $\{2^iN^j+1: 1\leq i,j<\log^k N\}$ contains a 
prime for any given odd $N$. 
\end{abstract}
\noindent \textbf{Key words:} Primitive element, Quadratic residue, quadratic non residue, missing generators, 
additive inverse, digraph, cyclic structure, integer factoring, RSA number.
\section{Introduction}  
\noindent For a prime $p$, $\mathbb{Z}_p^*$ is a cyclic group under multiplication. 
There exist $\phi(p-1)$ generators (or primitive elements) in the group, 
where $\phi$ is the Euler's totient function.  \\

\noindent \textbf{Definition :} An integer $a$ is called a quadratic residue$\pmod{p}$ 
if the congruence 
$x^2 \equiv a \pmod p$ has solutions. Otherwise, $a$ is 
called a quadratic non residue(simply non residue).  \\

In $\mathbb{Z}_p^*$, let $\mathcal{R}$ be the set of all quadratic residues. Let 
$\mathcal{N}$ 
be the set of all non residues. 
Let $\mathcal{G}$ be the set of generators. 
Let $\mathcal{NG}$ represent the set of all non residues,  
which are not generators. 
The sets have the following well-known relations. (For standard notions of the structure  
of $\mathbb{Z}_p^*$, we refer to \cite{apostol}).   
\begin{itemize}
 \item $\mathcal{R} \cap \mathcal{N} = \emptyset$,
 $\mathbb{Z}_p^{*}= \mathcal{R} \cup \mathcal{N}$.
 \item 
 $\mathcal{G} \cap \mathcal{NG} = \emptyset$, $\mathcal{N} = \mathcal{G} \cup \mathcal{NG}$.
 \item $|\mathcal{R}| = |\mathcal{N}| = \frac{p-1}{2}$.
 \item $|\mathcal{G}|=\phi(p-1)$. 
\end{itemize}
 
Further, the product of a residue and a non residue results in a non residue$\pmod{p}$. 
Thus, for any $g\in \mathcal{G}$, the set $\{gr\pmod{p}: r\in \R\}$ is same as $\mathcal{N}$. \\ 

The starting point of the present study is one simple observation that, with respect to a generator $g$, the set $\R$ can be viewed as the union of 
two disjoint subsets $\R_g$ and $\bar{\mathcal{R}}_g$, 
where $\R_g = \{r \in R: gr \in \G \}$ and  $\bar{\mathcal{R}}_g = \{r \in \R : gr \in \mathcal{NG}\}$. 
 Further, we define the following sets.  
\begin{eqnarray}
 \mathcal{I}(g) & = & \R_g \cap \R_{g^{-1}}  \nonumber \\
 \mathcal{NI}(g) & = & \bar{\mathcal{R}}_g \cap \bar{\mathcal{R}}_{g^{-1}}  \nonumber 
\end{eqnarray}

By definition, for any $r\in \mathcal{I}(g)$, 
$gr,g^{-1}r \in \mathcal{G}$. So, every element of the  set $\{gr,g^{-1}r| r\in \mathcal{I}(g)\}$ is a 
generator. But, 
this set may not contain all the elements of $\G$. The generators that are missing in this 
set are called \textit{missing generators} for $g\in \mathcal{G}$. The formal definition is as follows.   
\begin{displaymath}
\mathcal{M}(g) =  \mathcal{G}\setminus \{gr,g^{-1}r\lvert  r\in \mathcal{I}(g)\}. 
\end{displaymath}
 In words, the set $\mathcal{M}(g)$ consists of generators that can not be expressed as the product of 
 any residue from $\mathcal{I}(g)$ and either $g$ or $g^{-1}$. 
 With this observation, we are motivated to  
 study the properties of $\mathcal{M}(g)$. 
 In the study of these properties, we first prove that the cardinality of $\mathcal{M}(g)$ is same for any 
 $g\in \mathcal{G}$. 
 
 On the other hand, we have defined the set $\mathcal{NI}(g)$, which consists of 
 residues $r$ such that $gr$, $g^{-1}r\in \mathcal{NG}$. 
 Like $\mathcal{M}(g)$, the cardinality of $\mathcal{NI}(g)$ 
 is same for any $g\in \mathcal{G}$. However, the two sets are  
 entirely different: one set consists of generators and 
 the other set consists of residues.
  
 \par Define $P_n$ to be the set all primes such that, \ \ \ \ .
 We prove a connection between the two sets for primes $p \in P_3 = \{2^iq_1^{j_1}q_2^{j_2}+1: i,j_1,j_2\geq 1\}$. For the same 
 primes, we present a macroscopic property of 
 additive inverses of generators with help of the
 cyclic structure of missing generators explained in the following section. 
 
 \subsection{Results}
  For prime $p$, let $M_p$ 
 denote the cardinality of $\mathcal{M}(g)$ in $\mathbb{Z}_p^*$, and let 
 $N_p$ denote the cardinality of $\mathcal{NI}(g)$. 
 We prove the following. 

\begin{theorem}
 For prime $p$, with $p-1 = 2^s\prod_{j=1}^k q_j^{\alpha_j}$, $q_j$ odd prime
 \begin{eqnarray}
  M_p & = & \frac{p-1}{2\prod_{j=1}^k q_j}\bigg[ \prod_{j=1}^k (q_j-1) - 2\prod_{j=1}^k (q_j-2) + \prod_{j=1}^k (q_j-3) \bigg], \nonumber \\
  N_p & = & \frac{p-1}{2\prod_{j=1}^k q_j}\bigg[ \prod_{j=1}^k q_j - 2\prod_{j=1}^k (q_j-1) + \prod_{j=1}^k (q_j-2) \bigg]. \nonumber
  \end{eqnarray}
\end{theorem}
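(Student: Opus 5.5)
Fix a generator $g$ and write every element of $\mathbb{Z}_p^*$ as $g^x$ with $x\in\mathbb{Z}_{p-1}$. The plan is to turn both counts into counting problems over exponents and then evaluate them with the Chinese Remainder Theorem. In this language the relevant sets read as follows. $\R$ corresponds to the even $x$. $\G$ corresponds to the $x$ with $\gcd(x,p-1)=1$. For $r=g^{x}$ with $x$ even, $gr\in\G$ iff $\gcd(x+1,p-1)=1$ and $g^{-1}r\in\G$ iff $\gcd(x-1,p-1)=1$. Since $x\pm1$ is odd, the factor $2^s$ imposes no condition. Hence $\I(g)$ corresponds to the even $x$ such that, for every $j$, $x\not\equiv \pm1 \pmod{q_j}$. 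Likewise $\NI(g)$ corresponds to the even $x$ such that $x+1$ shares some $q_j$ with $p-1$ and $x-1$ shares some (possibly different) $q_{j'}$.

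First I count $N_p$ by inclusion--exclusion on the events $A=\{q_j\nmid x+1\ \forall j\}$ and $B=\{q_j\nmid x-1\ \forall j\}$. The quantity wanted is $\#(\text{even}) - \#A - \#B + \#(A\cap B)$. By CRT, $x$ ranges over $\mathbb{Z}_{p-1}\cong \mathbb{Z}_{2^s}\times\prod\mathbb{Z}_{q_j^{\alpha_j}}$, the parity condition lives in the first factor, and the residue of $x$ mod $q_j$ is uniform. This gives:
\begin{itemize}
\item $\#(\text{even})=\frac{p-1}{2}$;
\item $\#A=\#B=\frac{p-1}{2}\prod_j\frac{q_j-1}{q_j}$;
\item $\#(A\cap B)=\frac{p-1}{2}\prod_j\frac{q_j-2}{q_j}$, because for odd $q_j$ the classes $1$ and $-1$ are distinct.
\end{itemize}
Factoring out $\frac{p-1}{2\prod q_j}$ yields exactly the stated $N_p$. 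Independence of $g$ is automatic, because the description in exponents never used which generator was fixed.

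For $M_p$, I note that $\{gr,g^{-1}r: r\in\I(g)\}$ corresponds to the exponent set $S=\{x\pm1 : x \text{ even},\ x\not\equiv\pm1 \pmod{q_j}\ \forall j\}$. Every element of $S$ is automatically coprime to $p-1$. A generator exponent $y$ (odd, $q_j\nmid y$ for all $j$) lies in $S$ iff $y-1\in$ the index set or $y+1\in$ the index set. The first condition means $y-1\not\equiv\pm1$, that is, $y\not\equiv 2 \pmod{q_j}$ for all $j$ (the condition $y\not\equiv0$ is already known). The second means $y\not\equiv -2 \pmod{q_j}$ for all $j$. So $y$ is \emph{missing} iff $y$ is a generator exponent with some $q_j\mid y-2$ and some $q_{j'}\mid y+2$. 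This has the same inclusion--exclusion shape as before, now inside the set of generator exponents, where each residue mod $q_j$ ranges over $q_j-1$ nonzero classes:
\begin{itemize}
\item the total is $\phi(p-1)=\frac{p-1}{2}\prod\frac{q_j-1}{q_j}$;
\item avoiding the class $2$ gives $\frac{p-1}{2}\prod\frac{q_j-2}{q_j}$, and avoiding $-2$ gives the same;
\item avoiding both gives $\frac{p-1}{2}\prod\frac{q_j-3}{q_j}$, since $0,2,-2$ are distinct mod an odd prime.
\end{itemize}
Hence $M_p=\frac{p-1}{2\prod q_j}\big[\prod(q_j-1)-2\prod(q_j-2)+\prod(q_j-3)\big]$.

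The only delicate points are the bookkeeping near small primes and the empty product. When $q_j=3$, the factor $q_j-3=0$ correctly kills the term. This is consistent: mod $3$ the nonzero classes are exactly $\pm2$, so no generator avoids both. When $k=0$ (that is, $p-1=2^s$), both brackets vanish. This matches the direct argument: every odd $y$ is a generator and then $y-1$ is even, so nothing is missing. The main obstacle I expect is simply verifying carefully the equivalence "$y\in S$ iff $y\mp1$ is admissible". In particular, one must check that $y-1$ and $y+1$ are even residues, so they lie in $\R$. This holds because $p-1$ is even, so parity is well defined mod $p-1$.
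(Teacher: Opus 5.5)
Your argument is correct. Its first step, translating $\M(g)$ and $\NI(g)$ into exponent conditions, is exactly the paper's Lemmas 1 and 2 (the sets $A(p-1)$ and $B(p-1)$). The counting step is genuinely different.

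The paper first passes from $p-1$ to its radical $z=2\prod q_j$ (Lemma 3). It then stratifies $A(z)$ by which odd primes divide $x^2-4$, counts the $2^j-2$ nontrivial square roots of $4$ modulo each product of $j$ of the $q_i$, and combines these with a multi-level inclusion--exclusion. This produces $|A(z)|=\sum_{j=0}^{k-2}(-1)^{k-2-j}(3^{k-j}-2^{k-j+1}+1)N(z,j)$. That expression is only asserted to simplify to the product form, and the $N_p$ count is skipped as ``similar''.

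You instead take complements of just two events, each a conjunction of local conditions. CRT then evaluates them directly modulo $p-1$, with the factors $2^{s-1}$ and $q_j^{\alpha_j-1}$ absorbing Lemma 3. You land on the product formula with no identity left to check. Your route also explains the shape of the answer: each successive product counts residues avoiding one more forbidden class modulo every $q_j$. And it actually proves the $N_p$ formula.

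Expanding $\prod_j(q_j-c)=\sum_m(-c)^{k-m}e_m(q_1,\dots,q_k)$ in your bracket yields precisely the paper's coefficients $3^{k-m}-2^{k-m+1}+1$, which vanish for $m=k-1,k$. So your argument also supplies the paper's omitted simplification.

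What the paper's stratification buys is structural. It links missing-generator exponents to nontrivial roots of $x^2\equiv 4$. That is the viewpoint Section 3 uses for the cyclic structure, through the roots $x_1,x_2$ modulo $q_1q_2$ and the sets $B_{x_1},B_{x_2}$.
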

As an immediate consequence to the above theorem we have 
the following corollary comparing the cardinality of the two sets, subject to the number of prime factors of $p-1$.  
\begin{cor}
  For odd prime $p$, the following holds.
\begin{itemize}
 \item $N_p = M_p = 0$, if $p-1$ has at most two prime factors 
 \item $N_p = M_p > 0$, if $p-1$ has exactly three prime factors
 \item $N_p > M_p > 0$, if $p-1$ has at least four prime factors. 
 \end{itemize}
\end{cor}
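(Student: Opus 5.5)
The plan is to read the corollary directly off the two formulas of Theorem 1, using one observation: both brackets are second finite differences of a single polynomial. Since $p$ is odd, $2 \mid p-1$, so if $p-1$ has $n$ distinct prime factors then the number of odd primes in Theorem 1 is $k=n-1$. The common prefactor $\frac{p-1}{2\prod_j q_j}$ is positive, so only the brackets matter. Put $b_j = q_j-3\geq 0$ and
\[
g(t)=\prod_{j=1}^k (b_j+t).
\]
Then the bracket in $M_p$ is $g(2)-2g(1)+g(0)=\Delta^2 g(0)$, and the bracket in $N_p$ is $g(3)-2g(2)+g(1)=\Delta^2 g(1)$, where $\Delta h(t)=h(t+1)-h(t)$.

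Next I expand $g(t)=\sum_{m=0}^k c_m t^m$, where $c_m=e_{k-m}(b_1,\dots,b_k)\geq 0$ is an elementary symmetric function, and $c_k=1$. For each monomial put $d_m(t)=\Delta^2 t^m=(t+2)^m-2(t+1)^m+t^m$. By the binomial theorem,
\[
d_m(t)=\sum_{i\geq 2}\binom{m}{i}(2^i-2)\,t^{m-i}.
\]
From this formula, $d_m\equiv 0$ for $m\leq 1$, and $d_2\equiv 2$ is a constant. For $m\geq 2$, $d_m$ is a polynomial in $t$ with positive coefficients, so $d_m(0)=2^m-2>0$. For $m\geq 3$, $d_m$ has positive degree $m-2$, so it is strictly increasing on $t\geq 0$; in particular $d_m(1)>d_m(0)$.

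The three cases then follow in order.
\begin{itemize}
\item If $n\leq 2$, then $k\leq 1$, so $g$ has degree at most $1$ and both second differences vanish. The case $k=0$ uses empty products equal to $1$, giving the bracket $1-2+1=0$. Hence $M_p=N_p=0$.
\item If $n=3$, then $k=2$, so $g(t)=t^2+c_1t+c_0$. Both brackets equal $c_2 d_2=2$, hence $M_p=N_p>0$. Direct expansion of the original brackets also gives $2$ in each case, which serves as a check.
\item If $n\geq 4$, then $k\geq 3$. First, $M_p$'s bracket is $\sum_m c_m d_m(0)\geq c_k(2^k-2)>0$. Second, the difference of the brackets is $\sum_{m\geq 3} c_m\big(d_m(1)-d_m(0)\big)\geq d_k(1)-d_k(0)>0$. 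Hence $N_p>M_p>0$.
\end{itemize}

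The only step needing care is the sign bookkeeping: a naive expansion of $\prod(q_j-x)$ produces alternating signs. The substitution $b_j=q_j-3$ is what makes every coefficient nonnegative, using $q_j\geq 3$. Once that is in place, the positivity and monotonicity of $\Delta^2 t^m$ handle everything uniformly.
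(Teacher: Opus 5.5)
Your argument is correct and follows the paper's route: both deduce the corollary directly from the closed forms in Theorem 1, and the paper simply calls this immediate without giving details. Your substitution $b_j=q_j-3$ rewrites the two brackets as $\Delta^2 g(0)$ and $\Delta^2 g(1)$ for a polynomial $g$ with nonnegative coefficients, so their difference is $\Delta^3 g(0)$; this supplies the sign analysis the paper leaves implicit, in particular for the strict inequalities when $p-1$ has four or more distinct prime factors.
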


From the above corollary, for prime $p$ of the form $2^sq_1^{j_1}q_2^{j_2}+1$, $M_p = N_p = 2^sq_i^{j_1-1}q_2^{j_2-1}$.
As noted earlier, this equality motivates us 
to further study sets of missing generators and sets $\mathcal{NI}$ for prime $p$ with $p-1$ has exactly three prime 
factors. 
\subsubsection{Cyclic structure of missing generators}
For primes $p \in P_3 = \{2^iq_1^{j_1}q_2^{j_2}+1: i,j_1,j_2\geq 1\}$, 
we prove the following properties of missing generators. 
\begin{itemize}
 \item The set $\{\mathcal{M}(g): g\in\mathcal{G}\}$ forms an equinumerous partition of $\G$.
 %\item Suppose $\G_1, \G_2,\ldots, \G_t$ is the equinumerous partition of $\G$, resulting from $\{\mathcal{M}(g): g\in\mathcal{G}\}$. Then, for any $g\in \G_j$, $1\leq j\leq t$, $\mathcal{M}(g)$
 %is same.  
 \item For any $A\in \{\mathcal{M}(g): g\in\mathcal{G}\}$, the following holds: 
 $\forall \ x\in A$, $\mathcal{M}(x)$
 is same.
 
 \end{itemize}
A directed graph (simply digraph) $D$ is a finite nonempty set of objects called vertices together with a set of ordered pairs of distinct vertices of $D$ called edges. 

\par With the above properties, one can define a  digraph $D=(V,E)$ where 
 $ V = \{\mathcal{M}(g):g\in\G \}$ is the vertex set, 
 and $E$ is the edge set. There is an edge from $\mathcal{M}(g_i)$ to $\mathcal{M}(g_j)$ if and only if 
 $g_j\in \mathcal{M}(g_i)$. Both in-degree and out-degree 
 of each vertex is $1$. Thus, the digraph $D$ is a 
 collection of disjoint unicycles and each unicycle has the same number of vertices. 
 In the following examples, we illustrate the unicycles 
 of missing generators.  
\begin{itemize}
\item  For prime $p=31$, $\G = \{3,21,24,22,13,12,17,11\}$. 
Missing generator sets are:   $\mathcal{M}(17) = \mathcal{M}(11) = \{13,12\}$, 
$\mathcal{M}(13) = \mathcal{M}(12) = \{24,22\}$
, $\mathcal{M}(24) = \mathcal{M}(22) = \{3,21\}$ and $\mathcal{M}(3) =\mathcal{M}(21) = \{17,11\}$. The 
digraph of missing generators has only one unicycle consisting of 4 vertices. There are two generators in each vertex.  
 
\begin{figure}[h]
\centering
\includegraphics[width=0.6in]{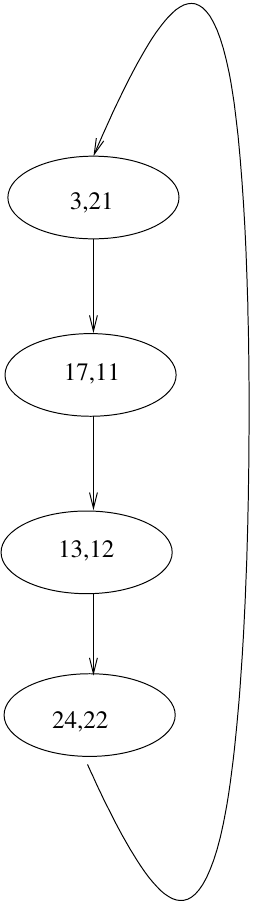}
\caption{unicycle of Missing generators for prime 31}
\end{figure}

\item For $p =43$, $\G = \{3,5,12,18,19,20,26,28,29,30,33,34\}$. Missing generator sets are: 
$\mathcal{M}(28)=\mathcal{M}(20)=\{5,26\}$, $\mathcal{M}(5) = \mathcal{M}(26) =\{3,29\}$, 
$\mathcal{M}(3) = \mathcal{M}(29) = \{28,20\}$, $\mathcal{M}(12) = \mathcal{M}(18) = \{19,34\}$, 
$\mathcal{M}(19) = \mathcal{M}(34) = \{30,33\}$ and
$\mathcal{M}(30) = \mathcal{M}(33) =\{12,18\}$. The digraph of missing generators has two unicycles and each unicycle has three vertices. 
 
\begin{figure}[h]
\centering
\includegraphics[width=1.4in]{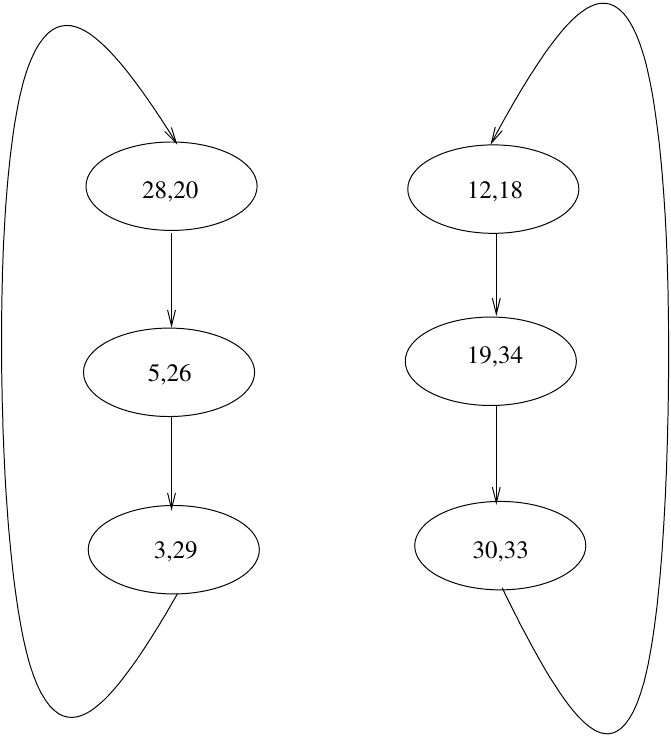}
\caption{unicycles of missing generators for prime 43}
\end{figure}

\end{itemize}

With the above observation, one can define a map T that uniquely associates a prime $p\in P_3$ with a triplet $(c,n,e)$ of numbers that describes the 
cyclic structure of missing generators in 
$\mathbb{Z}_p^*$. Here, $c$ denotes the number of 
unicycles, $n$ is the number of vertices in each unicycle, and 
$e$ is the number of generators in each
vertex. For 
prime $p=2^sq_1^{j_1}q_2^{j_2}+1$, the numbers 
follow the relation: $c\times n \times e=\phi(p-1)$, $e = 2^iq_1^{j_1-1}q_2^{j_2-1}$ and $4^n\equiv 1\pmod{q_1q_2}$. 
For example, for $p=31$, $(c,n,e) = (1,4,2)$ (Figure 1),
and for $p=43$, $(c,n,e) = (2,3,2)$ (Figure 2).

\subsubsection{\bf Connection between sets $\mathcal{M}(g)$ and sets $\mathcal{NI}$} 
\noindent  Like sets of missing generators, sets $\NI$
have the following properties.    
\begin{enumerate}
\item For any $g_1,g_2 \in \G$, exactly one of the 
following holds. 
\begin{itemize}
 \item $\mathcal{NI}(g_1) \cap \mathcal{NI}(g_2) = \emptyset$
  \item $\mathcal{NI}(g_1) = \mathcal{NI}(g_2)$.
\end{itemize}
\item For any $g_1,g_2\in \mathcal{M}(g)$, $\mathcal{NI}(g_1) = \mathcal{NI}(g_2)$.
\end{enumerate}
Due to the second property, we can call
$\mathcal{NI}(g_i)$, for 
$g_i\in \mathcal{M}(g)$, as the set 
$\mathcal{NI}$ \textit{corresponding to} $\mathcal{M}(g)$.

\subsubsection*{Additive Inverses of generators}
For primes of the form $4k+1$, for any $g\in \G$, 
both $g$ and $-g$ belongs to the same set of missing generators. 
For primes of the form $4k+3$, additive inverses of 
generators belongs to sets $\mathcal{NI}$. The additive property is elaborated as follows.  

For prime $p = 2^sq_1^{j_1}q_2^{j_2}+1$ of the 
form $4k+3$, suppose that $(c,n,e)$ 
is the triplet describing the digraph ($D$) of missing 
generators. 
Let $L = \{1,2,\ldots, c\}$ be a labeling (numbering)
of the unicycles of $D$. 
A relation S on $L$ is defined as follows: If the additive inverses of all generators in the 
 $i^{th}$ unicycle belongs to the sets $\mathcal{NI}$
 corresponding to the vertices of the 
 $j^{th}$ unicycle then $(i,j) \in S$. The relation $S$ is either reflexive or symmetric, subject to 
the conditions:  
\begin{itemize}
 \item  $S$ is reflexive if and only if 
 $2^n \equiv 1 \pmod {q_1q_2}$, where$k_1$ and $k_2$.
 $n$ is the number of vertices in each unicycle of 
 $D$. 
 \item  $S$ is symmetric if and only if either $n$ is odd or $2^n \equiv -1 \pmod{q_1q_2}$ for even $n$.
\end{itemize}
The relation S gives the additive property of all generators 
belonging to a cycle. In this sense, we call this a 
macroscopic property. 

\subsubsection*{Connection to integer factoring}

The Integer Factoring Problem (IFP) is the problem of 
finding the prime factors of a given integer $x$. The state-of-art classical technique for solving the problem is Number Field Sieve (NFS) algorithm whose running time complexity is sub-exponential\footnote{The running time complex of NFS is 
$exp((\log x)^{\alpha} (\log\log x) ^{1-\alpha})$ for $\alpha=1/3$.} in 
$\log x$ if $x$ is an worst-case instance \footnote{An worst-case instance of IFP is a large number N which is the product two 
equally sized primes $p$ and $q$, that is $p,q \approx \sqrt{N}$.} of the problem which form the basis for the security of RSA cryptographic system. However, Shor's quantum algorithm 
\cite{shor} solves the problem can be solved in $\log^3 x$ time. 

We show a computational equivalence 
between the problem of computing triplet (for a given $p$) and 
the problem of factoring a given RSA number, under the assumption: there exist
absolute constant $k$ such that  
the set $\{2^ir^j+1: 1\leq i,j<\log^k r\}$ contains a 
prime for any given odd $r$. The computational gap between 
the two problems is proved to be $O(\log ^{4k+3} N)$ for 
a given RSA number $N$. \\

 \noindent {\bf Organization of the paper:} In Section \ref{sec_proof}, the cardinality of sets 
 $\mathcal{M}(g)$ and $\mathcal{NI}(g)$ are established. In Section \ref{sec_cyclic}, the cyclic structure of 
 missing generators is studied. In Section \ref{sec_ni}, 
  the additive property of generators is presented with the help of cyclic structure of missing generators and 
  the relationship between $\mathcal{NI}$ and sets 
 $\mathcal{M}(g)$. In Section \ref{sec_t}, the map T is studied.
 and equivalence between the problem of computing triplets and IFP is studied.  The reader is requested to be informed that prime $p\in P_3$ starting Section \ref{sec_cyclic}
 for the rest of the paper .
 
\section{Proof of Theorem 1.1}\label{sec_proof} 
For an integer $n>0$, define 
\begin{eqnarray}
\mathit{A}(n) & = & \{ x: 0<x<n, gcd(x,n)=1, gcd(x-2,n)>1, gcd(x+2,n)>1\},  \nonumber \\
\mathit{B}(n) & = & \{ x : 0<x<n, x\, even,  gcd(x-1,n)>1, gcd(x+1,n)>1 \}. \nonumber 
\end{eqnarray} 
The set $\mathcal{M}(g)$ has the following property. 
\begin{lemma}
 $g^x\in \mathcal{M}(g)$ if and only if $x\in A(p-1)$. 
\end{lemma}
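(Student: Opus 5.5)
We will work entirely in exponent coordinates. Fix the generator $g$ and write every element of $\mathbb{Z}_p^*$ as $g^y$ with $y$ taken modulo $p-1$. Three standard facts are used throughout:
\begin{itemize}
\item $g^y\in\G$ if and only if $\gcd(y,p-1)=1$;
\item $g^y\in\R$ if and only if $y$ is even, and $g^y\in\mathcal{N}$ if and only if $y$ is odd (since $p-1$ is even);
\item $g^y\in\mathcal{NG}$ if and only if $y$ is odd and $\gcd(y,p-1)>1$.
\end{itemize}
With these, $\R_g=\{g^{2t}:\gcd(2t+1,p-1)=1\}$ and $\R_{g^{-1}}=\{g^{2t}:\gcd(2t-1,p-1)=1\}$. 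Hence
\[
\I(g)=\{g^{2t}:\gcd(2t+1,p-1)=1=\gcd(2t-1,p-1)\}.
\]

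Next we describe the complement of $\M(g)$ inside $\G$. A generator $g^x$ (so $\gcd(x,p-1)=1$, and $x$ is odd) fails to be missing exactly when $g^x=g\cdot g^{2t}$ or $g^x=g^{-1}g^{2t}$ for some $g^{2t}\in\I(g)$. That is, when $x-1\equiv 2t$ or $x+1\equiv 2t \pmod{p-1}$ with $g^{2t}\in\I(g)$. Since $x$ is odd, $x\mp1$ is even, so $g^{x\mp1}$ is a residue, and everything reduces to membership in $\I(g)$:
\begin{itemize}
\item $g^{x-1}\in\I(g)$ if and only if $\gcd(x,p-1)=1$ and $\gcd(x-2,p-1)=1$;
\item $g^{x+1}\in\I(g)$ if and only if $\gcd(x+2,p-1)=1$ and $\gcd(x,p-1)=1$.
\end{itemize}
The condition $\gcd(x,p-1)=1$ is automatic here. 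Therefore $g^x\notin\M(g)$ if and only if $\gcd(x-2,p-1)=1$ or $\gcd(x+2,p-1)=1$. Negating, $g^x\in\M(g)$ if and only if $\gcd(x,p-1)=1$, $\gcd(x-2,p-1)>1$ and $\gcd(x+2,p-1)>1$. Choosing the representative $0<x<p-1$, this is precisely $x\in A(p-1)$.

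The main care needed is bookkeeping rather than depth. First, the representation $gr$ with $r\in\I(g)$ gives $r=g^{x-1}$, and $g^{-1}r$ gives $r=g^{x+1}$. Second, all exponent arithmetic is modulo the even number $p-1$, so parity and gcd conditions are well defined on residues. Third, we must check the converse direction: if $x\in A(p-1)$, then $g^x$ is a generator and neither $g^{x-1}$ nor $g^{x+1}$ lies in $\I(g)$, so no representation exists. This follows from the same equivalences, since the representation of $g^x$ as $g\cdot r$ or $g^{-1}r$ forces $r$ uniquely.
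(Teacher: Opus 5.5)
Your proof is correct and follows essentially the same route as the paper: writing $g^x = g\cdot g^{x-1} = g^{-1}\cdot g^{x+1}$, membership in $\mathcal{M}(g)$ reduces to $g^{x\pm 1}\notin\mathcal{I}(g)$. Since $\gcd(x,p-1)=1$ makes half of each membership condition automatic, this becomes $\gcd(x\pm 2,p-1)>1$. Your version is slightly more careful than the paper's, because you state explicitly that the residue $r$ in $g^x=g^{\pm 1}r$ is forced to be $g^{x\mp 1}$, and that the representative $0<x<p-1$ must be chosen for the statement to hold literally.
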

\begin{proof}
For $x \in A(p-1)$,  $g^{x+2},g^{x-2} \not\in \G$. 
Thus $g^{x+1},g^{x-1} \not \in \mathcal{I}(g)$.
 Therefore, $g^x \in \mathcal{M}(g)$. 
Conversely, suppose that, $g^x \in \mathcal{M}(g)$, 
then $g^{x+1}, g^{x-1} \not \in \mathcal{I}(g)$. 
It is true only when $g^{x+1} \in \R_{g^{-1}}$, $g^{x-1} \in \R_g$, 
$g^{x-1} \in \bar{\R}_{g^{-1}}$ and $g^{x+1} \in \bar{\R}_g$. So, $g^{x+2}, g^{x-2} \in \N\G$.  
Therefore, $x\in A(p-1)$.   
\end{proof} 
\noindent It is easy to see that, the set $\mathcal{NI}(g) $ has the following property. 
 \begin{lemma}
  $g^{x} \in \mathcal{NI}(g)$ if and only if $x\in B(p-1)$.
 \end{lemma}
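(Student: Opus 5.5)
The plan is to translate membership in $\mathcal{NI}(g)$ into conditions on exponents, exactly as in the proof of the previous lemma. Since $g$ generates $\mathbb{Z}_p^*$, every element of $\mathbb{Z}_p^*$ is $g^x$ for a unique $x$ with $0<x<p-1$; we take $x$ in this range, using $g^{p-1}=1$ (the value $x=0$, i.e.\ $r=1$, is never in $\mathcal{NI}(g)$ because $g\cdot 1=g$ is a generator). We rely on three standard facts. First, $g^x\in\R$ if and only if $x$ is even. Second, $g^y\in\G$ if and only if $\gcd(y,p-1)=1$. Third, $\N\G=\N\setminus\G$, so $g^y\in\N\G$ if and only if $y$ is odd and $\gcd(y,p-1)>1$. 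All exponents are read modulo $p-1$, which is legitimate because $\gcd(y,p-1)$ depends only on $y \bmod (p-1)$.

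Unwinding the definitions, $g^x\in\mathcal{NI}(g)=\bar{\R}_g\cap\bar{\R}_{g^{-1}}$ means $g^x\in\R$, $g\cdot g^x=g^{x+1}\in\N\G$ and $g^{-1}g^x=g^{x-1}\in\N\G$. The first condition is equivalent to $x$ being even. When $x$ is even, both $x+1$ and $x-1$ are odd, so $g^{x\pm1}$ are automatically non residues. The conditions $g^{x\pm1}\in\N\G$ then reduce to $g^{x\pm1}\notin\G$, that is, $\gcd(x+1,p-1)>1$ and $\gcd(x-1,p-1)>1$.

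These conditions together are exactly the definition of $x\in B(p-1)$: $0<x<p-1$, $x$ even, $\gcd(x-1,p-1)>1$ and $\gcd(x+1,p-1)>1$. Each step above is an equivalence, so this gives both directions at once.

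The only point needing care is the boundary of the exponent range. At $x=p-2$ we have $x+1\equiv 0$, and $\gcd(p-1,p-1)>1$ is consistent with $g^{p-1}=1\notin\G$. Since $p-1$ is even, the case $x=p-2$ is odd and never arises anyway. Beyond this, the argument is a direct parity bookkeeping, so no step should be a real obstacle.
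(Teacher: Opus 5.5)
Your proof is correct and is the same argument as the paper's, which only says the lemma follows from the definition of $\mathcal{NI}(g)$; your parity bookkeeping ($g^x\in\R$ iff $x$ is even, so $g^{x\pm1}$ are automatically non residues and only $\gcd(x\pm1,p-1)>1$ remains) is exactly that unwinding made explicit. One small slip: the unique exponent range is $0\le x<p-1$, not $0<x<p-1$, but you dispose of $x=0$ immediately, so this costs nothing.
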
 
 \begin{proof}
The proof follows from the definition of $\mathcal{NI}(g)$.   
\end{proof} 
 
As a corollary to Lemma 1, we have $|\mathcal{M}(g)| = |A(p-1)|$, for any $g\in \G$. Similarly, 
from Lemma 2, $|\mathcal{NI}(g)|= |B(p-1)|$
for any $g\in \G$. Now, we establish the cardinality of $A(n)$, for integer $n>1$. 
This general result proves the value of $M_p$ stated in Theorem 1.1. Proving $|B(n)|$ 
takes similar arguments as the proof of $|A(n)|$. We thus skip the proof of $|B(n)|$. In establishing $|A(n)|$, we 
first prove the following.
\begin{lemma} 
For $n = 2^s\prod_{i=1}^k q_i^{\alpha_i}$, $q_i$ odd prime, $|A(n)| = \frac{n}{z} |A(z)|$, where $z=2\prod_{i=1}^k q_i$.
\end{lemma}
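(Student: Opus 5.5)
The plan is to show that membership of $x$ in $A(n)$ depends only on the residue of $x$ modulo $z=2\prod_{i=1}^k q_i$, the radical of $n$ (recall $s\geq 1$, since $n=p-1$ is even in our application; if $s=0$ the same argument works with $z$ replaced by $\prod q_i$, but we only need the even case). Then $A(n)$ is a disjoint union of $n/z$ translates of $A(z)$. Note that $z\mid n$, so $n/z$ is a positive integer.

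\textbf{Step 1.} The first step is the observation that, for any integer $y$, $\gcd(y,n)>1$ if and only if some prime divisor of $n$ divides $y$. The prime divisors of $n$ are exactly $2,q_1,\dots,q_k$, which are the prime divisors of $z$. Hence $\gcd(y,n)>1$ if and only if $\gcd(y,z)>1$. Likewise $\gcd(y,n)=1$ if and only if $\gcd(y,z)=1$. Moreover $\gcd(y,z)$ depends only on $y \bmod z$. Applying this to $y=x$, $x-2$ and $x+2$, the three conditions defining $A(n)$ depend only on $x\bmod z$. It is worth checking explicitly that shifting by $\pm 2$ causes no trouble: $x\equiv x' \pmod z$ implies $x\pm 2\equiv x'\pm 2\pmod z$.

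\textbf{Step 2.} Partition $\{1,\dots,n-1\}$, or more conveniently the full residue system $\{0,1,\dots,n-1\}$, into the $n/z$ blocks $\{mz,\dots,mz+z-1\}$ for $0\le m<n/z$. The value $x=0$ is never in $A(n)$, since $\gcd(0,n)=n>1$. Likewise the residue $0$ is never in $A(z)$. So including $0$ changes neither count, and the range conventions $0<x<n$ and $0<x<z$ cause no mismatch.

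\textbf{Step 3.} By Step 1, $x=mz+r$ lies in $A(n)$ exactly when $r$ lies in $A(z)$, with $r\in\{0,\dots,z-1\}$. Each block therefore contributes exactly $|A(z)|$ elements, and summing over the $n/z$ blocks gives $|A(n)|=\frac{n}{z}|A(z)|$.

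The only delicate point is Step 1: one must make sure that the gcd conditions with $n$ reduce to the same conditions with $z$, which holds precisely because $z$ has the same set of prime divisors as $n$. Beyond that the argument is routine bookkeeping. The same reasoning applies verbatim to $B(n)$. The parity condition on $x$ is preserved modulo $z$ because $2\mid z$, so the parity of $x$ depends only on $x\bmod z$.
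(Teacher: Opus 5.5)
Your proof is correct and follows the same approach as the paper, whose entire argument is that every lift $x+rz$ ($0\le r<n/z$) of an element $x\in A(z)$ lies in $A(n)$. Your Step~1 supplies the reason the paper leaves implicit: $n$ and $z$ have the same prime divisors (using $s\ge 1$), so membership depends only on $x \bmod z$. It also gives the reverse inclusion, that every element of $A(n)$ arises this way, which the counting needs but the paper's one-line proof never states.
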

\begin{proof}
For any $x\in A(z)$ and 
for all $r = 0,1,2,\ldots \frac{n}{z}-1$, $x+rz \in A(n)$,.  
\end{proof} 

\begin{comment}
 From the definition, for any $x\in A(z)$, $x^2-4$ 
is a product of at least two distinct odd prime factors of $z$. We remember this 
property in proving $|A(z)|$.
\end{comment}

From Lemma 3, it is enough to establish the cardinality of $A(z)$ for 
 $z=2\prod_{i=1}^k q_i$. To prove this, we introduce 
 the following notation. 
\begin{itemize}
 \item For  $2\leq j\leq k$, let $A(z_j)$ represent the set of 
all $x\in A(z)$ such that $x^2-4$ is a product of $j$ odd prime factors of $z$. 
\item $\omega(y)$ represent the number of distinct odd prime factors of $y$. 
\item $N(z,j)$ represents the sum of all divisors $y$ of $z$, with $\omega(y)=j$. 
Let $N(z,0)=1$. 
\end{itemize}

\begin{lemma}
 For $z=2\prod_{i=1}^k q_i$, $|A(z)| = \sum_{j=0}^{k-2} (-1)^{k-2-j} a_jN(z,j)$, where $a_j = 3^{k-j}-2^{k-j+1}+1$. 
\end{lemma}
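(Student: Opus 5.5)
The plan is to compute $|A(z)|$ directly with the Chinese Remainder Theorem, then expand the resulting product formula in elementary symmetric functions of the $q_i$, which will turn out to be exactly the quantities $N(z,j)$.

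\textbf{Step 1 (CRT count).} Since $z=2\prod_{i=1}^k q_i$ is squarefree, $x \bmod z$ corresponds to the tuple $(x \bmod 2, x \bmod q_1,\ldots,x\bmod q_k)$. The condition $\gcd(x,z)=1$ forces $x$ odd. Then $x\pm 2$ are odd, so the prime $2$ never divides $x\pm2$ and the residue mod $2$ contributes a factor $1$. For each $q_i$, the residue $x \bmod q_i$ lies in $\{1,\ldots,q_i-1\}$. The condition $\gcd(x-2,z)>1$ says that $x\equiv 2 \pmod{q_i}$ for some $i$, and $\gcd(x+2,z)>1$ says that $x\equiv -2\pmod{q_i}$ for some $i$. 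Note that $2\not\equiv -2$ since $q_i$ is odd.

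\textbf{Step 2 (inclusion--exclusion).} Count the nonzero tuples, subtract those avoiding the value $2$ in every coordinate, subtract those avoiding $-2$ in every coordinate, and add back those avoiding both. This gives
\[
|A(z)|=\prod_{i=1}^k (q_i-1)-2\prod_{i=1}^k (q_i-2)+\prod_{i=1}^k (q_i-3).
\]
Combined with Lemma 3 and Lemma 1, this already yields $M_p$ in Theorem 1.1.

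\textbf{Step 3 (expansion).} Write $e_j=\sum_{|S|=j}\prod_{i\in S}q_i$ for the elementary symmetric sums, with $e_0=1$. Then $\prod_i (q_i-c)=\sum_{j=0}^k (-c)^{k-j}e_j$, so
\[
|A(z)|=\sum_{j=0}^k (-1)^{k-j}\bigl(1-2\cdot 2^{k-j}+3^{k-j}\bigr)e_j=\sum_{j=0}^k(-1)^{k-j}a_j e_j .
\]
The coefficients satisfy $a_k=1-2+1=0$ and $a_{k-1}=3-4+1=0$, so the sum truncates at $j=k-2$. Finally, $(-1)^{k-j}=(-1)^{k-2-j}$. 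As a sanity check, for $k=2$ the formula gives $a_0=2$, and the direct count agrees.

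\textbf{Main obstacle.} The delicate point is the identification $N(z,j)=e_j$. The definition of $N(z,j)$ sums divisors $y$ with $\omega(y)=j$, where $\omega$ counts only odd primes. Each odd divisor $y$ therefore has a twin $2y$ with the same $\omega$, and if both are included the sum is $3e_j$, not $e_j$. I will read the definition as summing over the odd divisors $y$ of $z$, which are exactly the products of $j$ distinct $q_i$. Under that reading $N(z,j)=e_j$ (with $N(z,0)=1=e_0$), and the identity follows. I will state this interpretation explicitly, since the lemma is false under the other reading.
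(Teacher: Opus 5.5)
Your argument is correct, but it runs in the opposite direction from the paper's.

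The paper groups the elements of $A(z)$ by the odd primes dividing $x^2-4$. For each $y=\prod_{i\in S}q_i$ with $|S|=j$, it takes the $2^j-2$ non-trivial square roots of $4 \bmod y$. It counts their lifts coprime to $z$ as an alternating sum of the $N(z/y,\cdot)$, which equals $\prod_{i\notin S}(q_i-1)$. It then corrects the overcounting with $\sum_{j\ge 2}(-1)^j|A(z_j)|$, so that $a_j$ appears as $\sum_{r=2}^{k-j}(2^r-2)\binom{k-j}{r}$. Only at the end does it assert that the product $\prod(q_i-1)-2\prod(q_i-2)+\prod(q_i-3)$ ``simplifies to'' this.

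You obtain the product first, by a four-term inclusion--exclusion on whether the residues $2$ and $-2$ occur among the coordinates. The lemma is then just the expansion $\prod_i(q_i-c)=\sum_j(-c)^{k-j}e_j$ for $c=1,2,3$, with $a_{k-1}=a_k=0$ accounting for the truncation.

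What each route buys:
\begin{itemize}
\item Yours is shorter and delivers the formula for $M_p$ in Theorem 1.1 directly. It also avoids the vaguely defined sets $A(z_j)$ and the unexplained ``after simplification''.
\item The paper's gives $a_j$ a combinatorial meaning. It also sets up the square-roots-of-$4$ viewpoint that Section 3 reuses, where $A(p-1)=B_{x_1}\cup B_{x_2}$.
\end{itemize}

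Your reading of $N(z,j)$ as a sum over odd divisors is the right one. It agrees with the paper's own proof, where $y$ always ranges over products of odd primes of $z$. Under the literal reading, $N(z,j)=3e_j$ for $j\ge 1$, and already for $k=3$ the right-hand side would be $6e_1-12$ instead of the correct $2e_1-12$.
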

\begin{proof}
Suppose that, $y$ is a product of $j$ 
odd prime factors of $z$. Let $a$ be 
a non-trivial square-root of $4\pmod{y}$. Then, each $a+ry$, which is 
relatively prime to $z$, and less than $z$, belongs to $A(z_j)$. Using the inclusion-exclusion principle, 
the number of such $a+ry$ can be deduced to be  
$N(z/y,k-j) - N(z/y,k-j-1) + N(z/y,k-j-2) - \ldots + (-1)^{k-j} N(z/y,0)$. 
Since there are $2^j-2$ non-trivial square roots of $4\pmod{y}$, the total 
number of elements in $A(z_j)$ corresponding to the product $y$ is 
\begin{displaymath}
(2^j-2)\big(N(z/y,k-j) - N(z/y,k-j-1) + N(z/y,k-j-2) - \ldots + (-1)^{k-j} N(z/y,0)\big).
\end{displaymath}
Thus,  
\begin{eqnarray}
|A(z_j)|  & = & (2^j-2) \sum_{\stackrel{y|z,}{\omega(y) =j.}}  \bigg(N(z/y,k-j) - 
                     N(z/y,k-j-1) + \ldots +(-1)^{k-j} N(z/y,0)\bigg)  \nonumber \\
            & = & (2^j-2) \sum_{b=j}^k (-1)^{b-j} \binom{b}{j}N(z,k-b). \nonumber 
\end{eqnarray}
\noindent Using the inclusion-exclusion principle, we have 
\begin{eqnarray}
 |A(z)|& = & \sum_{j=2}^k (-1)^j |A(z_j)| \nonumber \\
       & = & \sum_{j=0}^{k-2} (-1)^{k-2-j} a_jN(z,j)  \,\, (\textrm{after simplification}) 
\end{eqnarray}
where $a_j = \sum_{r=2}^{k-j} (2^{r}-2)\binom{k-j}{r} =  3^{k-j}-2^{k-j+1}+1$.  
\end{proof}
One can that, the sum 
\begin{displaymath}
 \prod_{i=1}^k (q_i-1) - 2 \prod_{i=1}^k (q_i-2) + \prod_{i=1}^k (q_i-3)\, \textrm{simplifies to the expression for $|A(z)|$}.
\end{displaymath}
 This completes the proof of $M_p$.  
\section{Cyclic structure of missing generators}\label{sec_cyclic}
In this section, we prove that, sets of missing generators exhibit a cyclic behavior for 
prime $p = 2^iq_1^{j_1}q_2^{j_2}+1$. We define below some sets, whose properties are essential ingredients for the proof. \\
 
Let $x_1, x_2$ be the non-trivial roots of the congruence 
$x^2 \equiv 4 \pmod {q_1q_2}$. Let $x_1$ be odd, and $x_2$ be even. 
We define  
\begin{displaymath}
 B_{x_1} = \{ x_1+q_1q_2s | s = 0,2,\ldots,2^iq_1^{j_1-1}q_2^{j_2-1}-2 \}
\end{displaymath}
 \begin{displaymath}
 B_{x_2} = \{ (x_2+q_1q_2r | r = 1,3,\ldots,2^iq_1^{j_1-1}q_2^{j_2-1}-1 \}.
\end{displaymath}
For $j$ relatively prime to $p-1$, define $B_{x_1}(j) = \{xj\pmod{p-1} : x \in B_{x_1}\}$ 
and  $B_{x_2}(j) = \{xj\pmod{p-1} : x \in B_{x_2}\}$.  It can be verified that the 
sets have the following properties: 
\begin{itemize}
 \item  $B_{x_1}\cap B_{x_2} = \emptyset$, $B_{x_1}\cup B_{x_2} = A(p-1)$
 \item  $x \in B_{x_1}$,  $p-1-x \in B_{x_2}$ since $x_1+x_2 =q_1q_2$
 \item  $B_{x}(j) = B_{xj}$, for $x\in \{x_1,x_2\}$
 \item  $B_a(j) = B_x(j)$, for any $a\in B_x(j)$.
\end{itemize}
\noindent From Lemma 1, we have: 
\begin{equation}
 \label{rel_bet_missing_B}
 g^x \in \mathcal{M}(g) \Leftrightarrow x \in B_{x_1} \cup B_{x_2}.
\end{equation}
We now prove the following properties of missing generators. 
\begin{lemma}
For any $g_1,g_2 \in \G$, exactly one of the following
is valid
\begin{itemize}
\label{partition}
 \item $\mathcal{M}(g_1)\cap \mathcal{M}(g_2) = \emptyset$
 \item $\mathcal{M}(g_1)= \mathcal{M}(g_2)$.
\end{itemize}
\end{lemma}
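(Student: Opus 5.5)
The plan is to translate everything into exponents with respect to the single generator $g_1$ and then reduce the statement to a fact about residue classes modulo $q_1q_2$. Since $g_2\in\G$, write $g_2=g_1^{j}$ with $\gcd(j,p-1)=1$. By Lemma 1 applied to $g_2$, $\mathcal{M}(g_2)=\{g_2^{x}: x\in A(p-1)\}=\{g_1^{y}: y\in A(p-1)\,j \pmod{p-1}\}$. By Lemma 1 applied to $g_1$, $\mathcal{M}(g_1)=\{g_1^{y}: y\in A(p-1)\}$. Because $y\mapsto g_1^{y}$ is a bijection from $\mathbb{Z}_{p-1}$ onto $\mathbb{Z}_p^*$, it suffices to show the following: the exponent sets $A(p-1)$ and $A(p-1)j$ are either equal or disjoint.

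The key step is an explicit description of $A(p-1)$ for $p-1=2^iq_1^{j_1}q_2^{j_2}$. Take $x$ with $\gcd(x,p-1)=1$, so $x$ is odd and prime to $q_1,q_2$. Then $x-2$ and $x+2$ are odd, so their common factors with $p-1$ can only be $q_1$ or $q_2$. The condition $\gcd(x\pm 2,p-1)>1$ therefore means that $q_1q_2\mid x^2-4$ with $x\not\equiv\pm 2\pmod{q_1q_2}$. In other words, $x \bmod q_1q_2$ is one of the two nontrivial square roots $\pm u$ of $4$ modulo $q_1q_2$, namely $x\equiv 2 \pmod{q_1}$, $x\equiv -2\pmod{q_2}$, or the reverse. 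Hence
\[
A(p-1)=\{x\in\mathbb{Z}_{p-1}:\ x \text{ odd},\ \gcd(x,p-1)=1,\ x\equiv \pm u \pmod{q_1q_2}\}.
\]
This is the content of $A(p-1)=B_{x_1}\cup B_{x_2}$ recorded above. Multiplying by the unit $j$ preserves oddness and coprimality to $p-1$. It also gives a bijection of $\mathbb{Z}_{p-1}^*$, so
\[
A(p-1)j=\{x\in\mathbb{Z}_{p-1}^*:\ x\equiv \pm uj \pmod{q_1q_2}\}.
\]
This matches the listed property $B_x(j)=B_{xj}$.

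It then remains to compare the two-element sets $\{\pm u\}$ and $\{\pm uj\}$ in $(\mathbb{Z}/q_1q_2)^*$. Both have the form $\{\pm a\}$, and such sets are either equal or disjoint: if $\pm a$ meets $\pm b$, then $a=\pm b$, which gives $\{\pm a\}=\{\pm b\}$. Since $A(p-1)$ and $A(p-1)j$ are exactly the units of $\mathbb{Z}_{p-1}$ whose residues mod $q_1q_2$ lie in these sets, they are likewise equal or disjoint. Transferring back through $y\mapsto g_1^y$ yields the lemma. The two alternatives are mutually exclusive because $\mathcal{M}(g_1)$ is nonempty; indeed $M_p=2^iq_1^{j_1-1}q_2^{j_2-1}>0$ by the Corollary.

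The main obstacle is the characterization step. We must check carefully that $\gcd(x-2,p-1)>1$ and $\gcd(x+2,p-1)>1$ together force $q_1q_2\mid x^2-4$ via the nontrivial roots, rather than allowing the same prime to divide both $x-2$ and $x+2$. That cannot happen, since such a prime would divide $4$ and all the $q_i$ are odd. We must also confirm that the residue-class description is stable under multiplication by $j$. Both checks are routine once written via the Chinese remainder theorem.
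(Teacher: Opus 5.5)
Your proof is correct and takes essentially the same route as the paper's. You pass to exponents base $g_1$ via Lemma 1 and use $A(p-1)=B_{x_1}\cup B_{x_2}$, the odd units $\equiv \pm x_1 \pmod{q_1q_2}$. Then you observe that any overlap forces the multiplier to be $\equiv \pm 1 \pmod{q_1q_2}$. Your version is also a bit more complete than the paper's: you verify the characterization of $A(p-1)$ that the paper only asserts, you handle the $-1$ case uniformly (there the paper's claim $B_{x_1}(k)=B_{x_1}$ should say the two $B$ sets swap, though their union is unchanged), and you justify the ``exactly one'' clause via $\mathcal{M}(g_1)\neq\emptyset$.
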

\begin{proof}
Let $g_2 \equiv g_1^k\pmod{p}$ for 
some $k$ relatively prime to $p-1$. From (2), we have 
$\mathcal{M}(g_1) = \{g_1^x: x \in B_{x_1} \cup B_{x_2}\}$ and 
$\mathcal{M}(g_2) = \{g_1^y:  y \in B_{x_1}(k) \cup B_{x_2}(k)\}$.  Suppose $g \in  \mathcal{M}(g_1) \cap \mathcal{M}(g_2)$. Then, 
$g \equiv g_1^x \equiv g_1^y\pmod{p}$ for some  $x \in B_{x_1} \cup B_{x_2}$ and 
$y\in B_{x_1}(k) \cup B_{x_2}(k)$. This implies that 
$x\equiv y\pmod{p-1}$. Equivalently, $x\equiv kx^{'}\pmod{p-1}$ for some $x^{'} \in B_{x_1} \cup B_{x_2}$. 
This implies that $k = \pm 1+q_1q_2r\pmod{p-1}$ for some $r$. Thus, 
$B_{x_1}(k)=B_{x_1}$, $B_{x_2}(k)=B_{x_2}$.  
\end{proof}

Thus, the sets of missing generators $\mathcal{M}(g)$ forms a partition of $\G$. The number of 
distinct elements in the partition is $\phi(p-1)/M_p$. 
Further, these sets have the following property. 

\begin{lemma}
\label{same_miss}
 For any $g_1,g_2 \in \mathcal{M}(g)$, $\mathcal{M}(g_1)=\mathcal{M}(g_2)$.
\end{lemma}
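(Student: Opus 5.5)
Fix $g\in\G$ and write every generator as a power of $g$. By (\ref{rel_bet_missing_B}), $\mathcal{M}(g)=\{g^x: x\in A(p-1)\}$. Take $g_1=g^{a}$ and $g_2=g^{b}$ with $a,b\in A(p-1)=B_{x_1}\cup B_{x_2}$. Running the computation from the proof of Lemma \ref{partition} with $g$ as base gives
\[
\mathcal{M}(g^{a})=\{g^{y}: y\in aA(p-1) \pmod{p-1}\}.
\]
So it suffices to show that the sets $aA(p-1)$ and $bA(p-1)$ of residues modulo $p-1$ coincide. By Lemma \ref{partition}, it is enough to show they intersect.

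The plan is to describe $A(p-1)$ purely through congruences. Since $p-1=2^iq_1^{j_1}q_2^{j_2}$ has exactly two odd prime factors, $x\in A(p-1)$ means three things: $x$ is odd, $\gcd(x,q_1q_2)=1$, and each of $x-2$, $x+2$ shares a prime with $q_1q_2$. A single $q_l$ cannot divide both $x-2$ and $x+2$, since it would then divide $4$. Hence $x^2\equiv 4\pmod{q_1q_2}$ with $x\not\equiv \pm 2\pmod{q_1q_2}$. Equivalently, $A(p-1)$ is the set of odd $x$ with $x\equiv \pm x_1\pmod{q_1q_2}$, where $x_1,x_2=q_1q_2-x_1$ are the nontrivial roots; this matches the decomposition $B_{x_1}\cup B_{x_2}$.

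Write $u:=x_1 \bmod q_1q_2$. This is the element that is $2$ modulo one of $q_1,q_2$ and $-2$ modulo the other. For $a,c\in A(p-1)$ the product $ac$ is odd and $ac\equiv \pm u^2\pmod{q_1q_2}$. Since $u^2\equiv 4\pmod{q_1q_2}$, we get $aA(p-1)=\{y \text{ odd}: y\equiv \pm 2u\pmod{q_1q_2}\}$, using the bijection $y\mapsto ay$ to see that every such $y$ occurs. Here $\pm 2u$ means the residue that is $\pm 4$ modulo one prime and $\mp 4$ modulo the other. This set does not depend on the choice of $a\in A(p-1)$. Therefore $\mathcal{M}(g_1)=\mathcal{M}(g_2)$, and the intersection property of Lemma \ref{partition} is not even needed.

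The main obstacle is the first step: carefully justifying the identity $\mathcal{M}(g^a)=\{g^{ay}: y\in A(p-1)\}$. Lemma 1 is stated with respect to the generator itself, so one must apply it to $g^a$ and re-express $(g^a)^y=g^{ay}$. One must also check that multiplication by $a$ preserves parity and coprimality modulo $p-1$, so that the counting is consistent. After that, the Chinese remainder description of $A(p-1)$ makes the independence from $a$ immediate. This also explains the cycle structure: each step multiplies the class of exponents by $2u$ modulo $q_1q_2$, and $(2u)^2\equiv 16$, consistent with the condition $4^n\equiv 1\pmod{q_1q_2}$.
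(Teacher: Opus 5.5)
Your argument is the paper's: both write $\mathcal{M}(g^a)$ as $g$ raised to the exponent set $aA(p-1)$ and show that this set does not depend on $a\in A(p-1)$. The paper does this by citing the unproved ``properties of $B$ sets'' ($B_x(j)=B_{xj}$, etc.). Your description of $A(p-1)$ as the odd residues $\equiv\pm u\pmod{q_1q_2}$ actually carries out that verification.

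One line needs correcting. From $a\equiv\pm u$ and $c\equiv\pm u$ you rightly get $ac\equiv\pm u^2\equiv\pm4\pmod{q_1q_2}$. So $aA(p-1)$ is the set of odd $y$ with $y\equiv\pm4\pmod{q_1q_2}$, with the same sign modulo both primes, not $y\equiv\pm2u$. Take $p=31$, so $q_1q_2=15$ and $u=7$. Then $7A(30)=\{19,11\}$ with $19\equiv4$ and $11\equiv-4\pmod{15}$, whereas the odd residues $\equiv\pm2u\equiv\pm1\pmod{15}$ are $\{1,29\}$. With $g=3$, this is the difference between the correct $\mathcal{M}(17)=\{13,12\}$ and the wrong $\{3,21\}$. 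The independence from $a$, which is all the lemma needs, is unaffected, since $\pm u^2$ never involved $a$.

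Similarly, in your closing remark each step of a cycle multiplies the exponent class by $u$, not $2u$. The cycle length is the order of $u$ in $(\mathbb{Z}/q_1q_2\mathbb{Z})^*/\{\pm1\}$, and $u^{2n}=4^n\equiv1$ is what yields the paper's condition.
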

\begin{proof}
Suppose $g_1,g_2 \in \mathcal{M}(g)$. 
Let $g_1 = g^{x}\pmod{p}$ and $g_2 = g^{x^{'}}\pmod{p}$ for some  $x, x^{'} \in B_{x_1} \cup B_{x_2}$. From the 
properties of $B$ sets, $B_{x_1}(x) \cup B_{x_2}(x) =   B_{x_1}(x^{'}) \cup B_{x_2}(x^{'})$.  
\end{proof}

Thus, we can define a digraph $D=(V,E)$,where,
 $ V = \{\mathcal{M}(g):g\in\G \}$ and the edges defined as follows: 
 there is a directed edge from $\mathcal{M}(g_i)$ to $\mathcal{M}(g_j)$ if and only if 
 $g_j\in \mathcal{M}(g_i)$.  From Lemma \ref{partition} \& \ref{same_miss}, it can 
 be inferred that both in-degree and out-degree of each 
 vertex is $1$. 
 Thus, the digraph $D$ is a collection of disjoint unicycles. 
 \begin{theorem}
 Each unicycle of the digraph has same number of vertices.
\end{theorem}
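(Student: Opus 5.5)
The plan is to translate the digraph into the exponent group $U=(\mathbb{Z}/(p-1)\mathbb{Z})^*$ and show that the edge relation is multiplication by one fixed element of a finite abelian group. Such a map is a permutation whose cycles are cosets of a cyclic subgroup, so all cycles have the same length.

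\textbf{Step 1: vertices as cosets.} Fix a generator $g$; every generator is $g^k$ with $k\in U$. By (\ref{rel_bet_missing_B}) and the properties of the $B$ sets, $\mathcal{M}(g^k)=\{g^{ky}: y\in B_{x_1}\cup B_{x_2}\}$. Every $y\in B_{x_1}\cup B_{x_2}$ satisfies $y\equiv x_1$ or $y\equiv x_2\equiv -x_1 \pmod{q_1q_2}$. The proof of Lemma \ref{partition} shows that $\mathcal{M}(g^{k})=\mathcal{M}(g^{k'})$ exactly when $k'\equiv \pm k\pmod{q_1q_2}$. I would restate this as a lemma and check both directions from the $B$-set properties:
\begin{itemize}
\item if $k'/k\equiv\pm1\pmod{q_1q_2}$, then $B_{x_1}(k'/k)\cup B_{x_2}(k'/k)=B_{x_1}\cup B_{x_2}$;
\item conversely, if the two sets intersect, then $k'/k\equiv \pm1 \pmod{q_1q_2}$.
\end{itemize}
Hence $V$ is in bijection with the quotient group
\[
Q=(\mathbb{Z}/q_1q_2\mathbb{Z})^*/\{\pm1\},
\]
via $\mathcal{M}(g^k)\mapsto [k \bmod q_1q_2]$. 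This map is well defined and injective by the criterion above. It is surjective because reduction $U\to(\mathbb{Z}/q_1q_2\mathbb{Z})^*$ is onto: the prime $2$ is handled by parity, and the prime powers of $q_1,q_2$ lift.

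\textbf{Step 2: edges as multiplication.} There is an edge from $\mathcal{M}(g^k)$ to $\mathcal{M}(g^{m})$ iff $g^m\in\mathcal{M}(g^k)$, i.e. $m\equiv ky$ for some $y\in B_{x_1}\cup B_{x_2}$. That forces $[m]=[k]\cdot[\pm x_1]=[k]\cdot u$, where $u=[x_1]\in Q$. Note $x_1$ is a unit mod $q_1q_2$, since $x_1^2\equiv 4$. So the digraph is precisely the functional graph of the map
\[
\sigma: Q\to Q,\qquad \sigma(v)=uv.
\]
This recovers the in/out-degree-one statement already noted after Lemma \ref{same_miss}.

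\textbf{Step 3: equal cycle lengths.} Since $\sigma$ is multiplication by a group element, its orbits are the cosets $v\langle u\rangle$. Every cycle therefore has exactly $n=\operatorname{ord}_Q(u)$ vertices, the least $n$ with $x_1^n\equiv\pm1\pmod{q_1q_2}$. The number of cycles is $c=|Q|/n$. As a consistency check, $x_1^2\equiv4$ gives $4^n\equiv x_1^{2n}\equiv 1\pmod{q_1q_2}$, matching the relation stated in the introduction.

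\textbf{Main obstacle.} The real care is in Step 1: rigorously proving the criterion $\mathcal{M}(g^k)=\mathcal{M}(g^{k'})\iff k'\equiv\pm k \pmod{q_1q_2}$. Lemma \ref{partition} only sketches this. I would prove it directly from Lemma 1, since $x\in A(p-1)$ iff $x$ is odd and $x\equiv\pm x_1 \pmod{q_1q_2}$, which is exactly where the hypothesis $p\in P_3$ is used. Multiplying by a unit $k$ preserves oddness, so $A(p-1)\cdot k=A(p-1)$ iff $\pm x_1k\equiv \pm x_1$, i.e. $k\equiv\pm1\pmod{q_1q_2}$. Once this identification is solid, Steps 2 and 3 are formal.
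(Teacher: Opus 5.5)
Your proof is correct and follows essentially the same route as the paper. The paper walks from $\mathcal{M}(g)$ to $\mathcal{M}(g^{x_1}),\mathcal{M}(g^{x_1^2}),\ldots$ and takes the cycle length to be the least $n$ with $x_1^n\equiv\pm1 \pmod{q_1q_2}$, which is exactly your $\operatorname{ord}_Q([x_1])$; your identification of the vertex set with $(\mathbb{Z}/q_1q_2\mathbb{Z})^*/\{\pm1\}$ just makes explicit, via the criterion $\mathcal{M}(g^k)=\mathcal{M}(g^{k'})\iff k'\equiv\pm k \pmod{q_1q_2}$, that the intermediate vertices are distinct, which the paper leaves implicit.
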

\begin{proof}
Let $C$ be a unicycle of the digraph $D$. Suppose $g$ is 
a generator in a vertex of $C$. 
Then, from (\ref{rel_bet_missing_B}),  $C$ contains vertices $\mathcal{M}(g^{x_1})$, $\mathcal{M}(g^{x_1^2})$, 
$\mathcal{M}(g^{x_1^3})$, $\ldots$,. Suppose $n$ is 
the least positive integer such that $x_1^n = \pm 1+q_1q_2z \pmod{p-1}$.   
Then, $\mathcal{M}(g^{x_1^n}) = \mathcal{M}(g)$. Thus, $n$ is the the number of vertices of $C$. This property 
holds for any unicycle of $D$. 
\end{proof} 

\noindent The structure of the digraph is clear. It is a disjoint collection of unicycles of same size,  
and every vertex has the same number of generators. Thus,  we can associate a triplet $(c,n,e)$ 
with prime $p$ to describe the graph $D$ formed with $\mathcal{M}(g)$ of $\mathbb{Z}_p^*$. 
Here, $c$ represents the number of unicycles in $D$,  $n$ the number of vertices in a unicycle, 
and $e$ the number of generators corresponding to a vertex. These integers are such that  
$c\times n\times e = \phi(p-1)$ and  $e=|\mathcal{M}(g)| = 2^iq_1^{j_1-1}q_2^{j_2-1}$. 
In the following lemma, we can see that, the integer $n$ has the following property. 
\begin{lemma}
 If the digraph has unicycles of size $n$, then 
\begin{itemize}
 \item $2^n = \pm 1 \pmod {q_1q_2}$, if $n$ is even
 \item $2^{n-1} = \pm x_1^{-1} \pmod {q_1q_2}$, if $n$ is odd
\end{itemize}
\end{lemma}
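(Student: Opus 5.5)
The plan is to work entirely with the exponent description from the proof of Theorem 2. By that proof, and by (\ref{rel_bet_missing_B}) together with the properties of the sets $B_{x_1}(j)$, the unicycle through $\mathcal{M}(g)$ passes through $\mathcal{M}(g^{x_1}),\mathcal{M}(g^{x_1^2}),\ldots$. Its length $n$ is the least positive integer with $x_1^n \equiv \pm 1 + q_1q_2 z \pmod{p-1}$ for some $z$. The first step is to reduce this condition to a congruence modulo $q_1q_2$. Since $q_1q_2 \mid p-1$, reducing modulo $q_1q_2$ shows that the condition implies $x_1^n \equiv \pm 1 \pmod{q_1q_2}$. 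For the statement we only need this direction: from the defining property of $n$ we extract $x_1^n\equiv \pm1 \pmod{q_1q_2}$.

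The second step uses the defining property of $x_1$, namely $x_1^2 \equiv 4 \pmod{q_1q_2}$. We split according to the parity of $n$.
\begin{itemize}
\item If $n$ is even, then $x_1^n = (x_1^2)^{n/2} \equiv 4^{n/2} = 2^n \pmod{q_1q_2}$. Combined with the first step, this gives $2^n \equiv \pm 1 \pmod{q_1q_2}$.
\item If $n$ is odd, then $x_1^n = x_1 (x_1^2)^{(n-1)/2} \equiv x_1 2^{n-1} \pmod{q_1q_2}$, so $x_1 2^{n-1}\equiv \pm 1$.
\end{itemize}

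To finish the odd case we multiply by $x_1^{-1}$. This inverse exists modulo $q_1q_2$: since $x_1^2\equiv 4$ and $q_1,q_2$ are odd, $\gcd(x_1,q_1q_2)=1$. Multiplying gives $2^{n-1}\equiv \pm x_1^{-1} \pmod{q_1q_2}$, which is the second item of the lemma.

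The only delicate point is the first step: it must be clear that the cycle length produced in the proof of Theorem 2 really is characterised by the congruence $x_1^n\equiv\pm1 \pmod{q_1q_2}$. I would justify this through the property $B_{x}(j)=B_{xj}$ and the argument of Lemma \ref{partition}. There, $\mathcal{M}(g^{k})=\mathcal{M}(g)$ exactly when $k\equiv \pm1 \pmod{q_1q_2}$, with the parity automatically correct because $x_1$ is odd. Once that is accepted, the rest is the two-line computation above.
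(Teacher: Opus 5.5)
Your proposal is correct and is essentially the paper's proof. You reduce the cycle-length condition $x_1^n\equiv \pm 1+q_1q_2z \pmod{p-1}$ modulo $q_1q_2$, then split on the parity of $n$ using $x_1^2\equiv 4 \pmod{q_1q_2}$. Your extra justifications are correct and only make explicit what the paper leaves implicit: that $x_1$ is invertible modulo $q_1q_2$, and that $\mathcal{M}(g^k)=\mathcal{M}(g)$ exactly when $k\equiv \pm 1 \pmod{q_1q_2}$.
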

\begin{proof}
For unicycles of size $n$, we have the congruence $x_1^n 
\equiv \pm 1+q_1q_2z \pmod {p-1}$, which implies $x_1^n \equiv \pm 1 \pmod {q_1q_2}$. 
If $n$ is even, $x_1^n \equiv  (x_1^2)^{\frac{n}{2}} \equiv 4^{\frac{n}{2}} \equiv \pm 1 \pmod {q_1q_2}$ 
and $2^n \equiv \pm 1 \pmod {q_1q_2}$. 
If $n$ is odd, $x_1^{n-1}.x_1 \equiv 2^{n-1}x_1 \equiv \pm 1 \pmod {q_1q_2}$.
 Therefore $2^{n-1} \equiv \pm x_1^{-1} \pmod {q_1q_2}$. 
\end{proof}

\section{Properties of $\mathcal{NI}$}\label{sec_ni}
\begin{comment}
Till now we have studied sets $\mathcal{M}(g)$. It has been shown that $\{M(g,g^{-1}:g\in G\}$ is a partition of $G$. Like sets of missing generators,
set $NI$ also follow the property: $\mathcal{NI}g_i,g_i^{-1}) \cap \mathcal{NI}g_j,g_j^{-1}) = \emptyset$ or $\mathcal{NI}g_i,g_i^{-1}) = \mathcal{NI}g_j,g_j^{-1})$ for any $g_i,g_j\in G$. 
This property will be shown here. So the set $\{\mathcal{NI}g,g^{-1}:g\in G\}$ is a partition of $\{r\in \mathcal{NI}g,g^{-1}:g\in G\}$ which is 
a subset of $R$ the set of residues. We see that that this subset contains all generators of $R$. Finally, with help of generalized
notion on ``set of missing generators'', it will be shown that sets $NI$ also exhibit the graph structure. \\

\end{comment} 

It has been shown that $\{\mathcal{M}(g):g\in \mathcal{G}\}$ is a partition of $\mathcal{G}$ and sets of missing
generators exhibit cycle structure. We shall see that sets $\NI$ also follow the same property. 
This is achieved through series of results in present section and the section following.
We start with that following. (Recall that $p\in P_3$ with $p-1 =2^iq_1^{j_1}q_2^{j_2}$). 

\begin{lemma}\label{lem_ni}
 For any $g_1,g_2\in \mathcal{M}(g)$, 
 $\mathcal{NI}(g_1) = \mathcal{NI}(g_2)$.   
\end{lemma}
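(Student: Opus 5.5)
The plan is to reduce the statement to a multiplicative invariance property of the exponent set $B(p-1)$, in the same way that Lemma \ref{partition} and Lemma \ref{same_miss} were reduced to properties of $A(p-1)=B_{x_1}\cup B_{x_2}$.

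\textbf{Step 1: rewrite both sets in terms of $g$.} Write $g_1=g^{x}$ and $g_2=g^{x'}$. By Lemma 1 we have $x,x'\in A(p-1)$, and both are units modulo $p-1$. Lemma 2 applied to the generator $g_1$ gives
\[
\mathcal{NI}(g_1)=\{g^{xy}: y\in B(p-1)\},
\]
and similarly for $g_2$. So it suffices to show that $xB(p-1)=x'B(p-1)$ modulo $p-1$. Equivalently, with $u=x'x^{-1}\pmod{p-1}$, we need $uB(p-1)=B(p-1)$.

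\textbf{Step 2: $u\equiv\pm1\pmod{q_1q_2}$, and $u$ is odd.} Every element of $A(p-1)$ reduces modulo $q_1q_2$ to a non-trivial square root of $4$, namely $x_1$ or $x_2\equiv -x_1$. This is exactly the description $A(p-1)=B_{x_1}\cup B_{x_2}$ from Section \ref{sec_cyclic}. Hence $x'\equiv \pm x\pmod{q_1q_2}$, so $u\equiv\pm1\pmod{q_1q_2}$; this is the same conclusion as in the proof of Lemma \ref{partition}. Since $u$ is a unit modulo the even number $p-1$, $u$ is odd.

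\textbf{Step 3: describe $B(p-1)$ explicitly when $p\in P_3$.} Let $y\in B(p-1)$. Then $y$ is even, so $y\pm1$ are odd. The conditions $\gcd(y\pm1,p-1)>1$ therefore say that some $q_i$ divides $y-1$ and some $q_j$ divides $y+1$. We cannot have $i=j$, because then $q_i\mid 2$. Hence
\[
B(p-1)=\{y: 0<y<p-1,\ y \text{ even},\ y\equiv \varepsilon\pmod{q_1},\ y\equiv-\varepsilon\pmod{q_2},\ \varepsilon=\pm1\}.
\]
This membership condition depends only on the parity of $y$ and on $y$ modulo $q_1$ and $q_2$.

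\textbf{Step 4: conclude.} Multiplying $y$ by $u$ preserves evenness after reduction modulo $p-1$, because $u$ is odd and $p-1$ is even. It multiplies both residues of $y$ modulo $q_1$ and $q_2$ by the same sign $\pm1$, so the pattern "$\varepsilon$ modulo $q_1$, $-\varepsilon$ modulo $q_2$" is preserved. Thus $uB(p-1)\subseteq B(p-1)$, and equality follows because multiplication by the unit $u$ is injective.

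The only step that needs care is Step 3: the explicit description of $B(p-1)$ relies on $p-1$ having exactly two odd prime factors. Everything else mirrors the argument of Lemma \ref{same_miss}.
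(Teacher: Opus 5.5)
Your proof is correct, and it takes a somewhat different route from the paper. The paper never changes base. It writes $g_1=g^{s_1}$ and $g_2=g^{s_2}$, characterizes $g^k\in\mathcal{NI}(g_i)$ by the condition that $k$ is even and $\gcd(k\pm s_i,p-1)>1$, which is a translation condition on the exponent. It then notes that this condition is unchanged under $s_1\mapsto s_1+zq_1q_2$: since $k\pm s_i$ is odd, only $q_1$ and $q_2$ can divide it. You instead apply Lemma 2 with base $g_1$. This turns the claim into the multiplicative invariance $uB(p-1)=B(p-1)$ for units $u\equiv\pm1\pmod{q_1q_2}$, which you verify from the explicit CRT description of $B(p-1)$.

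The paper's route is shorter and needs no description of $B(p-1)$. Yours fits the multiplicative $B_x(j)$ framework of Lemmas 5 and 6, and, more importantly, it handles the sign $u\equiv-1\pmod{q_1q_2}$ explicitly. The paper's written proof asserts $s_2\equiv s_1+zq_1q_2\pmod{p-1}$, which is false when $s_1$ and $s_2$ lie in different halves $B_{x_1}$ and $B_{x_2}$. For instance, $g_2=g_1^{-1}$ always lies in $\mathcal{M}(g)$ together with $g_1$; for $p=31$, where $\mathcal{M}(g)=\{g^7,g^{23}\}$, this is the only nontrivial pair. In the paper's setting the omission is easily repaired, because the pair of conditions $\gcd(k+s,p-1)>1$ and $\gcd(k-s,p-1)>1$ is symmetric under $s\mapsto -s$. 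Your argument covers this case with no extra remark.
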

\noindent Proof: 
Suppose that $g_1 =g^{s_1}$ and $g_2 = g^{s_2}$ belong to the same set 
$\mathcal{M}(g)$. 
So that, $s_2 \equiv s_1+zq_1q_2 \pmod {p-1}$ for some $z \in 
\{0,2,4,\ldots,2^{i-1}q_1^{j_1}q_2^{j_2}-2\}$. 
If $g^k \in \mathcal{NI}(g_1)$, then $gcd(k\pm s_1, p-1) >1$. 
This implies that, $gcd(k \pm s_2, p-1) >1$. Therefore, $r=g^k \in \mathcal{NI}(g_2)$. 
Similarly, if $r \in \mathcal{NI}(g_2)$, then $r \in \mathcal{NI}(g_1)$. 
Hence $\mathcal{NI}(g_1) = \mathcal{NI}(g_2)$.   \hfill $\Box$ \\

Let $D$ be the directed graph generated of missing generators.  Let  
$V = \{\mathcal{M}(g)| g\in \G\}$ be the vertex set of $D$. Let $V_1 = \{\mathcal{NI}(g)| g\in \mathcal{G}\}$. 
Let $\mathcal{NI}(\mathcal{M}(g))$ to be $\mathcal{NI}(g_i)$, $g_i\in \mathcal{M}(g)$. 
Define a map $f:V \rightarrow V_1$ as $f(\mathcal{M}(g)) = \mathcal{NI}(g_i)$, where $g_i\in \mathcal{M}(g)$.
 By Lemma \ref{lem_ni}, the map $f$ is well defined. 

\begin{lemma} 
 The map $f$ is bijective
\end{lemma}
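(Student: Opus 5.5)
The plan is to prove surjectivity directly from the partition property of Lemma \ref{partition}. For injectivity, I will translate equality of two sets $\mathcal{NI}$ into a congruence on exponents modulo $q_1q_2$, exactly as was done for $\mathcal{M}$ via the sets $B_{x_1},B_{x_2}$.

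\emph{Surjectivity.} Take $\mathcal{NI}(g)\in V_1$. Since $\{\mathcal{M}(h):h\in\G\}$ partitions $\G$ (Lemma \ref{partition} together with the in-degree $1$ observation), $g$ lies in some vertex $\mathcal{M}(h)$. Then $f(\mathcal{M}(h))=\mathcal{NI}(g)$ by the definition of $f$ and Lemma \ref{lem_ni}.

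\emph{Description of $B(p-1)$.} First I will make $B(p-1)$ explicit when $p-1=2^iq_1^{j_1}q_2^{j_2}$. For $x$ even, both $x\pm1$ are odd, so each needs an odd prime factor among $q_1,q_2$. Since $(x+1)-(x-1)=2$, no odd prime divides both. Hence $x\equiv \pm1\pmod{q_1}$ and $x\equiv\mp1\pmod{q_2}$. Let $y$ denote the nontrivial square roots of $1$ modulo $q_1q_2$, namely $\pm y$. Then
\begin{displaymath}
B(p-1)=\{x: 0<x<p-1,\ x \textrm{ even},\ x\equiv \pm y \pmod{q_1q_2}\}.
\end{displaymath}
Membership depends only on $x \bmod 2q_1q_2$, and the set is closed under $x\mapsto -x$.

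\emph{Injectivity.} Suppose $f(\mathcal{M}(h_1))=f(\mathcal{M}(h_2))$, and pick $g_1\in\mathcal{M}(h_1)$, $g_2\in\mathcal{M}(h_2)$. Write $g_2=g_1^k$ with $\gcd(k,p-1)=1$, so $k$ is odd. By Lemma 2, $\mathcal{NI}(g_1)=\{g_1^x:x\in B(p-1)\}$ and $\mathcal{NI}(g_2)=\{g_1^{kx}:x\in B(p-1)\}$. Equality of these sets gives $kB(p-1)\equiv B(p-1)\pmod{p-1}$. Reducing modulo $q_1q_2$ gives $ky\equiv\pm y$, and since $y$ is a unit, $k\equiv\pm1\pmod{q_1q_2}$. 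Because $k$ is odd, this upgrades to $k\equiv\pm1\pmod{2q_1q_2}$.

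Next write $g_1=h_1^{x_0}$ with $x_0\in A(p-1)$ (Lemma 1), so that $g_2=h_1^{kx_0}$. Membership in $A(p-1)$ depends only on the residue modulo $z=2q_1q_2$, by the proof of Lemma 3. Moreover $A(z)$ is symmetric under $x\mapsto -x$, since its defining conditions on $x\pm2$ are swapped by negation. Hence $kx_0\in A(p-1)$, and so $g_2\in\mathcal{M}(h_1)$. Now $g_2$ lies in both $\mathcal{M}(h_1)$ and $\mathcal{M}(h_2)$, so Lemma \ref{partition} forces $\mathcal{M}(h_1)=\mathcal{M}(h_2)$.

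The step needing the most care is the explicit description of $B(p-1)$ modulo $q_1q_2$, in particular the claim that $x\pm1$ cannot both be divisible by the same $q_j$ and that exactly two classes $\pm y$ occur. Once that is in place, the argument reduces to the same "$k\equiv\pm1 \pmod{q_1q_2}$" criterion already used in the proof of Lemma \ref{partition}.
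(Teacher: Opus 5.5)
Your proof is correct and follows essentially the paper's route: surjectivity holds because every generator $g$ lies in some vertex $\mathcal{M}(h)$, and injectivity comes from exhibiting a common element of $\mathcal{M}(h_1)$ and $\mathcal{M}(h_2)$ and invoking Lemma \ref{partition}. For covering, the paper simply takes $h=g^{x_1^{-1}}$, so that $g=h^{x_1}$ with $x_1\in A(p-1)$, which is more direct than your appeal to the in-degree claim. Your explicit description of $B(p-1)$ modulo $q_1q_2$, giving $k\equiv\pm1\pmod{2q_1q_2}$, supplies the justification that the paper's step ``this implies $j\equiv k_1x_1+q_1q_2z_1\equiv k_2x_1+q_1q_2z_2$'' leaves unstated.
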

\noindent \textbf{Proof:}  Suppose that, for $g^{k_1},g^{k_2},g^j\in \G$, 
$$f(\mathcal{M}(g^{k_1})) = f(\M(g^{k_2})) = \mathcal{NI}(g^j).$$
This implies that,
 $j\equiv k_1x_1+q_1q_2z_1 \equiv k_2x_1+q_1q_2z_2\pmod{p-1}$ for some even $z_1,z_2$
and $g^{k_2x_1} \in \M(g^{k_1})\cap \M(g^{k_2})$. By Lemma \ref{lem_ni}, 
 $\M(g^{k_1}) =\M(g^{k_2})$. So, $f$ is one-one.
 For $\mathcal{NI}(g)\in V_1$, $f(M(g^y))=\mathcal{NI}(g)$, where $y\equiv x_1^{-1}\pmod{p-1}$. So, $f$ is onto. 
\hfill $\Box$ \\
 
We now prove the following property of $\mathcal{NI}$. 

\begin{theorem}
 For any $g_1,g_2 \in \mathcal{G}$, exactly one of the following holds
\begin{itemize}
 \item $\mathcal{NI}(g_1) \cap \mathcal{NI}(g_2) = \emptyset$
 \item $\mathcal{NI}(g_1) = \mathcal{NI}(g_2)$
\end{itemize}
\end{theorem}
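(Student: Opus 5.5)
The plan is to follow the proof of Lemma \ref{partition}, replacing the sets $B_{x_1}, B_{x_2}$ by an explicit description of $B(p-1)$ and using Lemma 2 in place of (\ref{rel_bet_missing_B}).

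\emph{Step 1: describe $B(p-1)$ modulo $q_1q_2$.} Write $p-1=2^iq_1^{j_1}q_2^{j_2}$. For $x\in B(p-1)$ the number $x$ is even, so $x\pm1$ are odd. Hence the conditions $\gcd(x-1,p-1)>1$ and $\gcd(x+1,p-1)>1$ say that each of $x-1$ and $x+1$ is divisible by $q_1$ or by $q_2$. An odd prime cannot divide both $x-1$ and $x+1$, since their difference is $2$. So one of the two primes divides $x-1$ and the other divides $x+1$. This means $x \bmod q_1q_2$ is a nontrivial square root of $1$ modulo $q_1q_2$. There are exactly two of these, $u$ and $-u$. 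Conversely, every even $x$ with $x\equiv \pm u \pmod{q_1q_2}$ lies in $B(p-1)$. Thus
\[
B(p-1)=\{x:\ 0<x<p-1,\ x \text{ even},\ x\equiv \pm u \pmod{q_1q_2}\}.
\]
By Lemma 2, $\mathcal{NI}(g_1)=\{g_1^x : x\in B(p-1)\}$.

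\emph{Step 2: change of generator.} Write $g_2\equiv g_1^k \pmod p$ with $\gcd(k,p-1)=1$. Lemma 2 applied to $g_2$ gives
\[
\mathcal{NI}(g_2)=\{g_1^{kx \bmod (p-1)}: x\in B(p-1)\}.
\]
Suppose $\mathcal{NI}(g_1)\cap \mathcal{NI}(g_2)\neq\emptyset$. Then there are $x,x'\in B(p-1)$ with $x\equiv kx' \pmod{p-1}$. Reducing modulo $q_1q_2$ gives $\pm u\equiv \pm k u \pmod{q_1q_2}$. Since $u$ is a unit modulo $q_1q_2$, we can cancel it and obtain $k\equiv \pm1 \pmod{q_1q_2}$. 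This is the same conclusion as in the proof of Lemma \ref{partition}.

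\emph{Step 3: multiplication by $k$ preserves $B(p-1)$.} Assume $k\equiv \pm 1 \pmod{q_1q_2}$. The number $k$ is odd, being coprime to $p-1$. Take $x\in B(p-1)$ and consider $kx \bmod (p-1)$.
\begin{itemize}
\item It is even, because $p-1$ is even and $x$ is even.
\item It is congruent to $\pm x\equiv \pm u \pmod{q_1q_2}$, because $q_1q_2\mid p-1$.
\end{itemize}
So multiplication by $k$ sends $B(p-1)$ into itself. It is injective modulo $p-1$, hence a bijection of $B(p-1)$. Therefore $\mathcal{NI}(g_2)=\mathcal{NI}(g_1)$.

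\emph{Step 4: ``exactly one''.} The two alternatives cannot hold together, because $\mathcal{NI}(g)\neq\emptyset$. Indeed, by the Corollary $N_p>0$ when $p-1$ has exactly three prime factors.

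The main obstacle is Step 1, the clean characterization of $B(p-1)$ as the even residues lying over $\pm u$. The parity argument ruling out one odd prime dividing both $x\pm1$ must be stated carefully. Once that is in place, Steps 2 and 3 are the same unit-cancellation argument as in Lemma \ref{partition}.
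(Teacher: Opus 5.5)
Your proof is correct, but it takes a different route from the paper's.

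\textbf{The paper's route.} It never describes $B(p-1)$ explicitly. It uses the map $f:V\to V_1$, well defined by Lemma \ref{lem_ni} and shown bijective, to write $\mathcal{NI}(g_1)=f(\mathcal{M}(g^{k_1}))$ and $\mathcal{NI}(g_2)=f(\mathcal{M}(g^{k_2}))$. It then argues that a common element forces $\mathcal{M}(g^{k_1})\cap\mathcal{M}(g^{k_2})\neq\emptyset$, and pulls the partition property of missing generators (Lemma \ref{partition}) back through $f$.

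\textbf{Your route.} You identify $B(p-1)$ as the even exponents lying over the nontrivial square roots $\pm u$ of $1$ modulo $q_1q_2$. You then rerun the unit-cancellation argument of Lemma \ref{partition} directly on that set.

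\textbf{What your route buys.}
\begin{itemize}
\item It is self-contained: it needs only Lemma 2 and the Chinese remainder theorem.
\item It gives $N_p=(p-1)/(q_1q_2)>0$ on the spot, so Step 4 does not need the Corollary.
\item It yields the criterion $\mathcal{NI}(g_1)=\mathcal{NI}(g_2)\iff k\equiv\pm1\pmod{q_1q_2}$. This is the same criterion for $\mathcal{M}(g_1)=\mathcal{M}(g_2)$ that comes out of the proof of Lemma \ref{partition}. So the two ways of grouping generators coincide, which also recovers Lemma \ref{lem_ni}.
\item It sidesteps a slip in the paper's write-up. The displayed congruence $j\equiv k_1x_1+q_1q_2z_1 \pmod{p-1}$ is the exponent pattern of the generators in $\mathcal{M}(g^{k_1})$. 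A residue $g^j\in f(\mathcal{M}(g^{k_1}))$ instead has $j$ even and $j\equiv\pm 2k_1\pmod{q_1q_2}$, because $x_1\equiv\pm 2u$. The conclusion $k_1\equiv\pm k_2\pmod{q_1q_2}$ survives, but only after the explicit computation your Step 1 provides.
\end{itemize}

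\textbf{What the paper's route buys.} Once repaired, it gives the explicit pairing of each vertex of $D$ with an $\mathcal{NI}$ set via $f$. The later results on additive inverses and the relation $S$ depend on that pairing.
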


\noindent Proof: Suppose $g^j \in \mathcal{NI}(g_1) \cap \mathcal{NI}(g_2)$. Let $f(\M(g^{k_1})) = \mathcal{NI}(g_1)$
and $f(\M(g^{k_2})) = \mathcal{NI}(g_2)$. 
Then $j\equiv k_1x_1+q_1q_2z_1 \equiv k_2x_1+q_1q_2z_2
\pmod{p-1}$. This implies that,  
$g^{k_2x_1} \in \M(g^{k_1})\cap \M(g^{k_2})$ and therefore $M(g^{k_1}) = M(g^{k_2})$. Hence 
$f(\M(g^{k_1})) =  \mathcal{NI}(g_1) = \mathcal{NI}(g_2) = f(\M(g^{k_2}))$.
\hfill $\Box$ \\

So, $\{\mathcal{NI}(g):g\in \mathcal{G}\}$ is a partition of 
the set $Y = \{g^y \in \mathcal{NI}(\M(g)): g\in \mathcal{G}\}$. 
Now, we look at the structure of $Y$. 

%We shall consider two cases separately 
%when $p\equiv 3\pmod 4$ and $p\equiv 1\pmod 4$. Let $\R_i$ be the generator set of 
%subgroup of $\mathbb{Z}_p^*$ of order $\frac{p-1}{2^i}$. 

\begin{lemma}\label{lem_y}
 For $g\in \mathcal{G}$,
the number of generators of a subgroup of order 
$\frac{p-1}{2^k}$  in $\mathcal{NI}(g)$ is  
\begin{equation}
  a_k = \left \{
  \begin{aligned}
    & \frac{1}{2^k}|\mathcal{NI}(g)|, && \text{if}\ 1\leq k\leq i-1 \\
    & \frac{1}{2^{k-1}}|\mathcal{NI}(g)|, && \text{if}\ k=i 
    \end{aligned} \right.
\end{equation} 
(Recall $|\mathcal{NI}(g)| = 2^iq_1^{j_1-1}q_2^{j_2-1}$)
\end{lemma}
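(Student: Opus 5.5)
The plan is to translate membership in $\mathcal{NI}(g)$ and the property ``generator of the subgroup of order $\frac{p-1}{2^k}$'' into congruence conditions on the exponent $x$, and then count by the Chinese Remainder Theorem. By Lemma 2, $\mathcal{NI}(g)=\{g^x : x\in B(p-1)\}$. The element $g^x$ has order $\frac{p-1}{\gcd(x,p-1)}$. So $g^x$ generates the (unique) subgroup of order $\frac{p-1}{2^k}$ exactly when $\gcd(x,p-1)=2^k$. This means $q_1\nmid x$, $q_2\nmid x$, and $\gcd(x,2^i)=2^k$.

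\textbf{Step 1: describe $B(p-1)$ modulo $q_1q_2$.} For $x$ even, $x-1$ and $x+1$ are odd. Their only possible common factors with $p-1$ are therefore $q_1$ and $q_2$. An odd prime cannot divide both $x-1$ and $x+1$, since it would then divide $2$. Hence the conditions $\gcd(x\pm1,p-1)>1$ force one of the two patterns
\[
x\equiv 1 \pmod{q_1},\ x\equiv -1\pmod{q_2} \quad\text{or}\quad x\equiv -1 \pmod{q_1},\ x\equiv 1\pmod{q_2}.
\]
So $x\equiv y_1$ or $y_2 \pmod{q_1q_2}$, where $y_1,y_2$ are the two nontrivial square roots of $1$ modulo $q_1q_2$. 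Conversely, every even $x$ with this property lies in $B(p-1)$. In particular every $x\in B(p-1)$ is automatically coprime to $q_1q_2$. Consequently the only constraint left for the order condition is the $2$-adic one, $\gcd(x,2^i)=2^k$.

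\textbf{Step 2: count via CRT.} Write $p-1=2^i\cdot m$ with $m=q_1^{j_1}q_2^{j_2}$. By CRT, $x \bmod (p-1)$ corresponds to a pair $(x \bmod 2^i,\ x \bmod m)$, and the two conditions decouple. The condition $x \bmod m \in \{y_1,y_2\} \pmod{q_1q_2}$ allows $2q_1^{j_1-1}q_2^{j_2-1}$ residues. The condition that $x$ be even allows $2^{i-1}$ residues mod $2^i$. This recovers $|\mathcal{NI}(g)|=2^iq_1^{j_1-1}q_2^{j_2-1}$, which serves as a consistency check. Now restrict the residue mod $2^i$ further:
\begin{itemize}
\item For $1\le k\le i-1$, the condition $\gcd(x,2^i)=2^k$ means $v_2(x)=k$. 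This leaves $2^{i-k-1}$ residues mod $2^i$, a fraction $2^{i-k-1}/2^{i-1}=1/2^k$ of the even residues.
\item For $k=i$, the condition is $x\equiv 0\pmod{2^i}$. This is a single residue, a fraction $1/2^{i-1}$.
\end{itemize}
Multiplying by the odd-part count gives $a_k$ as stated.

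The only delicate point is Step 1: checking carefully that evenness of $x$ together with the two gcd conditions forces $x$ to be a nontrivial root of unity modulo $q_1q_2$. This rules out $q_j\mid x$, so the odd part imposes no additional restriction. Once that is settled, the rest is a routine CRT count.
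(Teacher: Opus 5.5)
Your proof is correct, but it follows a different route from the paper's. The paper does not count inside a single $\mathcal{NI}(g)$. Instead it argues globally, using four facts:
\begin{itemize}
\item every exponent $y$ with $g^y\in\mathcal{NI}(g)$ is prime to $q_1q_2$;
\item the distinct $\mathcal{NI}$ sets are pairwise disjoint (Theorem 3);
\item there are $\frac{\phi(q_1q_2)}{2}$ of them (via the bijection $f$);
\item $\sum_{k=1}^{i}\phi\big(\frac{p-1}{2^k}\big)=\phi(p-1)$.
\end{itemize}
It sets $b_k=\frac{\phi(q_1q_2)}{2}a_k$ and notes $b_k\le\phi\big(\frac{p-1}{2^k}\big)$ by disjointness. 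Since $\sum_k b_k=\phi(p-1)$ equals the sum of the upper bounds, every inequality must be an equality, and $a_k$ follows.

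Your CRT count has several advantages:
\begin{itemize}
\item It is more elementary and self-contained, needing only Lemma 2.
\item Your Step 1 supplies the justification for the coprimality to $q_1q_2$, which the paper merely asserts.
\item It recomputes $|\mathcal{NI}(g)|=2^iq_1^{j_1-1}q_2^{j_2-1}$ without the formula for $N_p$, whose proof the paper skips.
\item It makes evident that $a_k$ does not depend on $g$. The paper's squeeze tacitly assumes this when it writes $b_k=\frac{\phi(q_1q_2)}{2}a_k$; otherwise it only controls the sum over all $\mathcal{NI}$ sets.
\end{itemize}

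What the paper's route buys is the byproduct $b_k=\phi\big(\frac{p-1}{2^k}\big)$. This says the $\mathcal{NI}$ sets together contain every generator of each subgroup of order $\frac{p-1}{2^k}$, $1\le k\le i$, which is exactly what the next lemma on additive inverses uses. You can recover this from your count as well. You would need the disjointness of the $\frac{\phi(q_1q_2)}{2}$ distinct $\mathcal{NI}$ sets and a check that $\frac{\phi(q_1q_2)}{2}a_k=\phi\big(\frac{p-1}{2^k}\big)$; that identity does hold with your values of $a_k$.
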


\noindent Proof: For every $g^y\in \mathcal{NI}(g)$, $(y,q_1q_2) =1$. 
So $a_1 + a_1+ \ldots+ a_i  =  |\mathcal{NI}(g)|$. Let $\frac{\phi(q_1q_2)}{2}a_k = b_k$, 
$1\leq k\leq i$. Since the number of
distinct $\mathcal{NI}$ sets are $\frac{\phi(q_1q_2)}{2}$,   
\begin{eqnarray}\label{eq_y1}
 \frac{\phi(q_1q_2)}{2}(a_1+a_2+\ldots+a_i) & = &  \phi(p-1) \nonumber \\ 
   b_k & \leq & \phi(\frac{p-1}{2^k}) 
\end{eqnarray}
Further, we have  
\begin{eqnarray}\label{eq_y2}
 \sum_{k=1}^i b_k  =   \sum_{k=1}^i \phi(\frac{p-1}{2^k}) = \phi(p-1) \nonumber \\
 \sum_{k=1}^i (\phi(\frac{p-1}{2^k})-b_k)  =   0 
\end{eqnarray}
 From (\ref{eq_y1}) and (\ref{eq_y2}), it follows that, $b_k = \phi(\frac{p-1}{2^k})$, which implies the desired value for $a_k$. \hfill $\Box$  \\

\subsection{Additive inverses of generators of $\mathcal{Z}_p^*$} 
We know that, for any $g_i\in \M(g)$, $g_i^{-1}$ also belongs to $\M(g)$. 
The property of additive inverses is subjected to the prime factorization of $p-1$. For primes of the form
$4k+1$, the additive inverse of a generator in one node belong to the same node. 
The argument is simple: suppose $g_i = g^{\pm x_1+zq_1q_2} \in \M(g)$, then $-g_i = g^{\pm x_1+zq_1q_2 + \frac{p-1}{2}} = 
g^{\pm x_1+q_1q_2(z+\frac{p-1}{2q_1q_2})}\in \M(g)$. So, for every $g\in G$, 
both $g$ and $-g$ belong to the same node. For primes of form $4k+3$, additive inverses of generators belong to sets $\mathcal{NI}$. It is proved 
in the following lemma. 

\begin{lemma}
 Let $p\equiv 3\pmod 4$. Then, for every $g\in \mathcal{G}$, there exists 
 $g_i \in \mathcal{G}$ such that
  $\mathcal{NI}(g_i) = \{ -g: g\in \M(g)\}$.
\end{lemma}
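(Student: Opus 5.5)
The plan is to work entirely on the level of exponents, as in Lemmas 1 and 2. Fix $g\in\G$ and write every element of $\mathbb{Z}_p^*$ as $g^y$ with $y$ taken modulo $p-1$. Since $p\equiv 3\pmod 4$ we have $i=1$, so $p-1=2q_1^{j_1}q_2^{j_2}$ and $\frac{p-1}{2}=q_1^{j_1}q_2^{j_2}$ is odd and divisible by $q_1q_2$. Also $-1=g^{(p-1)/2}$.

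\emph{Step 1: the negated set.} By Lemma 1 (equivalently by (\ref{rel_bet_missing_B})), $\M(g)=\{g^x : x\in A(p-1)\}$. For $p-1=2q_1^{j_1}q_2^{j_2}$ I would unwind $A(p-1)$ as the set of odd $x$ prime to $q_1q_2$ with $x\equiv x_1$ or $x\equiv x_2\pmod{q_1q_2}$. Here $x_1,x_2$ are the two nontrivial square roots of $4$ modulo $q_1q_2$. Then
\[
\{-h:h\in\M(g)\}=\{g^{x+(p-1)/2}: x\in A(p-1)\}.
\]
The translation $x\mapsto x+\frac{p-1}{2}$ swaps parity, because $\frac{p-1}{2}$ is odd, and it fixes the class modulo $q_1q_2$. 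So this set is exactly $E=\{g^y : y \text{ even},\ y\bmod q_1q_2\in\{x_1,x_2\}\}$.

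\emph{Step 2: describe $\NI(g^k)$ directly.} For an odd $k$ prime to $p-1$, $g^y\in\NI(g^k)$ means that $y$ is even and that neither $g^{y+k}$ nor $g^{y-k}$ is a generator. Since $y\pm k$ is odd, this says $\gcd(y+k,q_1q_2)>1$ and $\gcd(y-k,q_1q_2)>1$. No single $q_j$ can divide both $y+k$ and $y-k$, since it would then divide $2k$. Hence one prime divides $y-k$ and the other divides $y+k$. By CRT this means $y\equiv kw\pmod{q_1q_2}$ with $w$ one of the two nontrivial square roots of $1$ modulo $q_1q_2$; these two roots are $\pm w_0$.

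\emph{Step 3: choose $g_i$.} I would take $k=2+q_1q_2$. It is odd, and it is prime to $q_1q_2$ because $q_1q_2$ is odd, so $k$ is prime to $p-1$ and $g_i=g^k\in\G$. With $k\equiv 2\pmod{q_1q_2}$, Step 2 gives $\NI(g_i)=\{g^y: y\text{ even},\ y\equiv\pm 2w_0\pmod{q_1q_2}\}$. Finally, $(\pm 2w_0)^2=4$ and $\pm2w_0\not\equiv\pm2$ componentwise, so $\{2w_0,-2w_0\}=\{x_1,x_2\}$ modulo $q_1q_2$. Thus $\NI(g_i)=E$, which proves the lemma.

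The main obstacle I expect is the exact unwinding in Steps 1 and 2. One must check that the conditions $\gcd(x\pm2,p-1)>1$ for odd $x$, and $\gcd(y\pm k,p-1)>1$ for even $y$, really reduce to conditions modulo $q_1q_2$ alone. The point is that the factor $2$ never helps, since the relevant numbers are odd. One must also justify that the two nonempty-gcd conditions force \emph{different} primes. Once that is in place, the identification $\{2w_0,-2w_0\}=\{x_1,x_2\}$ is a short CRT check. As a sanity check, both sets have size $2q_1^{j_1-1}q_2^{j_2-1}$, consistent with $M_p=N_p$ from the Corollary.
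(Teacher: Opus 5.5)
Your proof is correct, and it takes a more explicit route than the paper's.

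\textbf{The paper's route.} The paper argues existentially. It takes one $g^k\in\M(g)$ and observes that $-g^k=g^{k+\frac{p-1}{2}}$ is a residue generating the subgroup of order $\frac{p-1}{2}$. It then uses the counting Lemma~\ref{lem_y}, which says the $\NI$ sets exhaust all such generators, to place this element in some $\NI(g_i)$. Next it asserts that the remaining negations $-g^{k+zq_1q_2}$ fall in the same $\NI(g_i)$. Finally it concludes from $|\NI(g_i)|=|\M(g)|$.

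\textbf{Your route.} You compute both sides in closed form via CRT. The negated set is $\{g^y: y \text{ even},\ y\bmod q_1q_2\in\{x_1,x_2\}\}$, and each $\NI(g^k)$ is $\{g^y: y \text{ even},\ y\equiv \pm k w_0 \pmod{q_1q_2}\}$. You then match the two with $k=2+q_1q_2$.

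\textbf{What your route buys.}
\begin{itemize}
\item It gives an explicit $g_i$. In fact it shows the admissible $g_i$ are exactly the $g^k$ with $k\equiv\pm2\pmod{q_1q_2}$.
\item It needs neither Lemma~\ref{lem_y}, nor the bijection $f$, nor the partition theorem for $\NI$ sets. Your Step 2 reproves the partition theorem immediately when $i=1$.
\item It needs no cardinality count.
\item It sidesteps a soft spot in the paper's version. Translating exponents by multiples of $q_1q_2$ only reaches the elements of $\M(g)$ in the same class as $k$ modulo $q_1q_2$. The other class ($x_2\equiv -x_1$) therefore needs an extra remark before the count can finish, for instance that both $\M(g)$ and $\NI(g_i)$ are closed under inversion. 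Your $\pm w_0$ bookkeeping handles both classes at once.
\end{itemize}

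\textbf{What the paper's route buys.} It reuses the paper's structural results and ties the additive-inverse property to the partition of $\NI$ sets, without computing any $\NI$ set explicitly.
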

\noindent Proof: For $g^k\in \M(g)$, $g^{k+\frac{p-1}{2}}$ is a residue and it is of order $\frac{p-1}{2^s}$, 
where $s$ is the greatest integer such that 
$2^s| (k+\frac{p-1}{2})$. By Lemma \ref{lem_y}, $g^k\in \mathcal{NI}(g_i)$ for some $g_i\in \G$. Moreover, 
for any $g_z = g^{k+zq_1q_2}$ of  $\M(g)$, $-g_z = g^{k+q_1q_2(z+\frac{p-1}{2})} \in  \mathcal{NI}(g_i)$. 
Since, $|\mathcal{NI}(g_i)| = |M(g)|$ which completes the proof.  \hfill $\Box$ \\

It follows that $\mathcal{NI}(g_i^{x_1^z}) = \{ (-g): g\in \M(g^{x_1^z})\}$. 
Suppose $g_i\in \M(g_j)$. Then $\mathcal{NI}(\M(g_j^{x_1^z})) = \mathcal{NI}(g_1^{x_1^z})$. 
So additive inverses of generators in unicycle 
consisting of vertices $\M(g),\M(g^{x_1}),\ldots ,\M(g^{x_1^{n-1}})$ 
belong to sets $\mathcal{NI}$ corresponding to vertices of unicycle whose vertex set is 
$$\{\M(g_j),\M(g_j^{x_1}), \ldots,\\
\M(g_j^{x_1^{n-1}})\}.$$ 
By this property, we have for every unicycle of D, 
there is a corresponding unicycle of the digraph by sets $\mathcal{NI}$. 
This property is further studied with the help of relation $S$.
\subsubsection{Relation $S$}
 Let $L = \{1,2,\ldots, n\}$ be a labeling on unicycles of the digraph $D$ of missing generators for a 
 prime $p\in P_3$, where $n$ is the number of unicycles in the digraph. 
Let $S$ be a relation on $L$. If additive inverses of generators in 
$i^{th}$ unicycle belong to sets $\mathcal{NI}$ corresponding to vertices in 
$j^{th}$ unicycle, then the pair $(i,j) \in S$. We see that $S$ is either 
reflexive or symmetric.

 \begin{lemma} 
 $S$ is reflexive if and only if $2^n \equiv 1 
 \pmod {q_1q_2}$, $n$ is the number of vertices in a 
 unicycle of the digraph. 
\end{lemma}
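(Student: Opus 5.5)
The plan is to translate everything into exponent arithmetic modulo $q_1q_2$, where the digraph becomes the action of multiplication by $x_1$ on a quotient of $(\mathbb{Z}/q_1q_2)^*$. Since $p\equiv 3\pmod 4$, we have $i=1$ and $p-1=2m$ with $m=q_1^{j_1}q_2^{j_2}$ odd, and $(p-1)/2=m\equiv 0\pmod{q_1q_2}$.

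\textbf{Step 1 (vertices of $D$).} Fix $g\in\G$ and write generators as $g^a$ with $\gcd(a,p-1)=1$. By Lemma 1 and (\ref{rel_bet_missing_B}), $\M(g^a)=\{g^{ay}: y\equiv \pm x_1 \pmod{q_1q_2},\ y \text{ odd}\}$. Hence the vertex $\M(g^a)$ is determined by the class $\pm a x_1$ in $G=(\mathbb{Z}/q_1q_2)^*/\{\pm1\}$. The unicycle through $\M(g^a)$ is $\{\M(g^{ax_1^t})\}$, and its generators have exponent classes $\pm a x_1^{t}$, $t\ge 1$. So a unicycle is a coset $aH$ of $H=\langle x_1\rangle$ in $G$, and $|H|=n$.

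\textbf{Step 2 (the sets $\NI$).} By Lemma 2, $\NI(g^a)=\{g^{aw}: w\in B(p-1)\}$, and $w\in B(p-1)$ means that $w$ is even and $w$ is a nontrivial square root of $1$ modulo $q_1q_2$. Since $x_1^2\equiv 4$, the nontrivial roots are $\pm u$ with $u=x_1 2^{-1}$. Thus $\NI(g^a)$ consists of the even exponents in the class $\pm a x_1 2^{-1}$.

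\textbf{Step 3 (additive inverses).} Negation sends $g^k$ to $g^{k+m}$. This flips parity but keeps the class modulo $q_1q_2$, which recovers the preceding lemma. The generators in the unicycle $aH$ have exponent classes $\pm a x_1^{t+1}$. The class $\pm a x_1^{t+1}$ equals $\pm b x_1 2^{-1}$ exactly when $b\equiv \pm 2a x_1^{t}$. So the inverses of the unicycle $aH$ land in the $\NI$-sets of the unicycle $2aH$, and $S$ is the map $aH\mapsto 2aH$ on $G/H$. Because $G$ is abelian, $(i,i)\in S$ for one unicycle if and only if it holds for all of them, and both are equivalent to $2\in H$, that is, $2\equiv \pm x_1^t \pmod{q_1q_2}$ for some $t$.

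\textbf{Step 4 (main obstacle).} The remaining task is to show that $2\in\langle \pm x_1\rangle$ if and only if $2^n\equiv 1 \pmod{q_1q_2}$. I would write $x_1=2u$ with $u^2=1$, so that $x_1^t=2^t u^t$.

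For the forward direction, assume $2=\pm 2^t u^t$. Then $2^{t-1}=\pm u^{t}$, so $2^{2(t-1)}=1$. I would then combine this with the minimality of $n$ and with the lemma on $2^n$ versus the parity of $n$ to pin the order of $2$ down to a divisor of $n$ with the correct sign.

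For the converse, assume $2^n\equiv 1$. Then $x_1^n=2^n u^n=u^n$. I would split on the parity of $n$. If $n$ is even, $x_1^n=1$ and I would try to exhibit $2$ as $\pm x_1^{t}$ using $4=x_1^2$. If $n$ is odd, $x_1^n=u=\pm1$ contradicts nontriviality unless the sign works out, and this case needs checking.

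The delicate point is the sign $\pm$: $\langle\pm x_1\rangle$ contains $4$ automatically, and whether it also contains $2$ is precisely what the condition $2^n\equiv 1$ must encode. I would first verify the equivalence on $p=31$, where $x_1=7$, $H=\{\pm1,\pm2,\pm4,\pm7\}$ and $2^4\equiv1$, before fixing the case analysis.
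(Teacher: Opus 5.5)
You correctly reduce the lemma to a statement about residues mod $q_1q_2$, but Step 4, which is the whole content of the lemma, is left as a plan in both directions, so the proposal does not yet prove the statement.

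\textbf{What is correct.} Steps 1--3 hold, and they are a cleaner version of what the paper does. The paper's criterion for $-g\in\mathcal{NI}(g^{x_1^j})$, namely $\gcd(x_1^j\pm(\frac{p-1}{2}+1),p-1)>1$, says exactly that $x_1^j\equiv\pm u \pmod{q_1q_2}$. Since $x_1=2u$, the condition $u\in\langle\pm x_1\rangle$ is the same as your condition $2\in\langle\pm x_1\rangle$.

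\textbf{The gap.} In Step 4 you write ``I would try to exhibit\ldots'', ``this case needs checking'', and ``pin the order of $2$ down\ldots'', but neither direction is carried out. The odd-$n$ branch of the converse is also misdiagnosed: there is no sign to work out. If $n$ is odd and $2^n\equiv 1$, then $x_1^n=2^nu^n=u$. But $x_1^n\equiv\pm1$ by the definition of $n$, which contradicts $u\neq\pm1$. So $n$ is forced to be even.

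\textbf{The missing idea.} Work with $u=2^{-1}x_1$ instead of $2$, and use two facts: $u$ is a square root of $1$ different from $\pm1$, and $n$ is minimal (it is the order of $x_1$ modulo $\pm1$).

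\emph{Converse.} Since $n$ is even, $x_1^n=2^n=1$. Then $x_1^{n/2}$ squares to $1$, and by minimality of $n$ it is not $\pm1$. Hence $x_1^{n/2}=\pm u$, and $2=x_1u=\pm x_1^{1+n/2}$. This is exactly the paper's converse, which exhibits $-g\in\mathcal{NI}(g^{x_1^{n/2}})$.

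\emph{Forward direction.} Write $u=\pm x_1^j$. Then $x_1^{2j}=1$, so $n\mid 2j$. Also $n\nmid j$, since otherwise $u=\pm1$. Hence $2j/n$ is odd and $n$ is even. Now $1=x_1^{2j}=(x_1^n)^{2j/n}$ with $x_1^n=\pm1$ and an odd exponent, which forces $x_1^n=1$. Because $n$ is even, $2^n=x_1^nu^{-n}=1$.

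The paper's own forward step just asserts $n=2j$; the divisibility argument above is what actually justifies it. Inserting these two short arguments as your Step 4 completes your proof.
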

\noindent Proof: Suppose $S$ is reflexive. 
So the additive inverse of generator $g$ belong to the set 
$\mathcal{NI}(g^{x_1^j})$ for some $j$ and $gcd(x_1^j\pm (\frac{p-1}{2}+1),p-1)>1$. 
This implies that, $x_1^{2j} \equiv 1 \pmod {q_1q_2}$ and $2^{2j} \equiv 1 \pmod{q_1q_2}$. 
Since $2j$ is the least positive integer such that $2^{2j} \equiv 1 \pmod{q_1q_2}$, $n=2j$. \\
Suppose that, $n$ is the number of vertices in a 
unicycle of the digraph, 
with $2^n \equiv 1 \pmod{q_1q_2}$. Since $n$ is even, $x_1^n \equiv 1 \pmod{q_1q_2}$
 and $-g$ belongs to the set 
 $\mathcal{NI}(g^{x_1^\frac{n}{2}})$. Therefore, $S$ is reflexive  \hfill $\Box$

\begin{lemma}
 If $S$ is not reflexive, then $S$ is symmetric. 
\end{lemma}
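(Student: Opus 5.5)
The plan is to encode the relation $S$ as a map on unicycles and show that this map is an involution; symmetry of $S$ is exactly that statement.

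\textbf{Setting up the map.} Write every generator as a power of a fixed generator $g$. By (\ref{rel_bet_missing_B}) and Lemma \ref{partition}, vertices of $D$ correspond to classes of exponents $k$ (coprime to $p-1$) modulo the equivalence $k\sim \pm k+q_1q_2z$ with $z$ even. The successor of the vertex of $k$ is the vertex of $kx_1$, so a unicycle is the orbit $\{k x_1^t\}$. Since $p\equiv 3 \pmod 4$, we have $i=1$ and $\frac{p-1}{2}$ is odd. The additive inverse of $g^{k}$ is $g^{k+\frac{p-1}{2}}$.

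By the lemma preceding the relation $S$, and by the bijection $f$, there is a unique vertex $\M(g^{m})$ with $f(\M(g^m))=\{-h: h\in\M(g^k)\}$. Here $\NI(\M(g^m))$ consists of the residues $g^{mx_1+q_1q_2 z}$. So the defining congruence is
\[ m x_1 \equiv \pm\Big(k+\tfrac{p-1}{2}\Big) \pmod{q_1q_2}. \]
Reducing modulo $q_1q_2$ (note that $\frac{p-1}{2}\equiv 0 \pmod{q_1q_2}$ only in the exponent-class sense, so I will track everything modulo $q_1q_2$ together with a parity condition), this yields a map $\sigma$ on vertices. Since $\sigma$ commutes with multiplication by $x_1$, it induces a map $\tau$ on unicycles, and $(a,b)\in S$ exactly when $b=\tau(a)$.

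\textbf{Key step: $\tau$ is an involution.} I would show $\sigma^2$ maps every vertex into its own unicycle, i.e. applying the construction twice returns to $\M(g^{k x_1^{t}})$ for some $t$.

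First compute $\sigma$ explicitly. Modulo $q_1q_2$, $\sigma$ sends $k$ to $k x_1^{-1}$ up to sign, with the parity adjusted by the odd shift $\frac{p-1}{2}$. Applying it twice gives $k x_1^{-2}\equiv k\cdot 4^{-1}$ up to sign. This is the main obstacle: one must check that the two parity/sign adjustments introduced by adding $\frac{p-1}{2}$ twice cancel. Adding $\frac{p-1}{2}$ twice adds $p-1\equiv 0$, so they should cancel.

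Granting this, $\sigma^2(k)\equiv \pm k x_1^{-2}$, which is the vertex $\M(g^{k x_1^{n-2}})$ because $x_1^n\equiv\pm1$. That vertex lies on the same unicycle. Hence $\tau^2=\mathrm{id}$.

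\textbf{Conclusion.} Since $\tau^2=\mathrm{id}$, if $(a,b)\in S$, i.e. $b=\tau(a)$, then $\tau(b)=a$, so $(b,a)\in S$ and $S$ is symmetric.

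The hypothesis that $S$ is not reflexive is used only to exclude the degenerate case $\tau(a)=a$. By the previous lemma, this case is exactly $2^n\equiv1 \pmod{q_1q_2}$. Outside it, $S$ pairs distinct unicycles, which also shows that $c$ is even.
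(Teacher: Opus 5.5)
\textbf{There is a genuine gap: the explicit formula for $\sigma$ is wrong.} Your strategy of passing to a map $\tau$ on unicycles and showing $\tau^2=\mathrm{id}$ has the right shape. But computing $\sigma$ is the whole content of the proof, and the step you call the main obstacle is only asserted (``they should cancel \ldots\ granting this'').

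The error is the claim that $\NI(\M(g^m))$ consists of the residues $g^{mx_1+q_1q_2z}$. Those exponents describe the generators $h\in\M(g^m)$ whose set $\NI(h)$ you take, not the residues inside $\NI(h)$. By Lemma 2, $\NI(g^b)=\{g^{bw}: w\in B(p-1)\}$. Here $B(p-1)$ is the set of even $w$ with $w\equiv \pm y_1 \pmod{q_1q_2}$, where $y_1\equiv x_1/2$ is a nontrivial square root of $1$ modulo $q_1q_2$. So for $h\in\M(g^m)$, the elements of $\NI(h)$ have exponents $\equiv \pm m x_1 y_1\equiv \pm 2m$, not $\pm m x_1$.

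With your formula $m\equiv\pm k x_1^{-1}$, $\sigma$ sends each vertex to its predecessor on the same unicycle. Then $\tau$ would be the identity and $S$ would \emph{always} be reflexive. That contradicts the preceding lemma, and it fails concretely for $p=43$: there $-3=40$ and $-29=14$ both lie in $\NI(12)$, and $12$ is on the other $3$-cycle. Your closing remark that $S$ pairs distinct unicycles is inconsistent with your own $\sigma$.

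\textbf{The cancellation you propose is also not the real mechanism.} Since $i=1$, $\frac{p-1}{2}=q_1^{j_1}q_2^{j_2}$ is genuinely $\equiv 0 \pmod{q_1q_2}$; adding it only turns an odd (generator) exponent into an even (residue) exponent.

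What you need is the following. For $a,b$ odd and prime to $q_1q_2$, we have $-g^a=g^{a+\frac{p-1}{2}}\in\NI(g^b)$ if and only if $\gcd(a+\frac{p-1}{2}\pm b,\,p-1)>1$ for both signs. Equivalently, $a\equiv \pm b y_1 \pmod{q_1q_2}$. Because $y_1^2\equiv 1$, this condition is symmetric in $a$ and $b$. Hence on vertices $\sigma$ is multiplication by $y_1$, it commutes with multiplication by $x_1$, and $\sigma^2=\mathrm{id}$ exactly.

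This symmetry is precisely the paper's step from $\gcd(kx_1^j\pm(\frac{p-1}{2}+1),p-1)>1$ to $-g^k\in\NI(g^{x_1^{-j}})$. It proves $S$ symmetric without using non-reflexivity at all.

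Your side claim that $c$ is even needs one more observation. The fixed-point condition, $y_1\equiv \pm x_1^t \pmod{q_1q_2}$ for some $t$, does not depend on the unicycle. So $\tau$ is either the identity or fixed-point-free.
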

\noindent Proof : Suppose unicycles of the digraph are numbered so 
that the generators $g$ and $g^k$ belong to
$n_1^{th}$ and $n_2^{th}$ 
unicycle respectively. Let $-g$ belong to the set $\mathcal{NI}$
corresponding to a vertex 
in $n_2^{th}$ unicycle. So $-g \in \mathcal{NI}(g^{kx_1^j})$ for some $j$.
 Since $gcd(kx_1^j \pm (\frac{p-1}{2}+1),p-1)>1$, it is true that 
$gcd(k\pm x_1^{\phi(q_1q_2)-j},q_1q_2)>1$. Therefore $-g^k$ belongs to 
the set $\mathcal{NI}(g^{x_1^{\phi(q_1q_2)-j \pmod n}})$ 
corresponding to a vertex in $n_1^{th}$ unicycle. Hence $S$ is symmetric. \hfill $\Box$

\begin{cor}
 $S$ is symmetric if and only if $n$ is odd or $2^n \equiv -1 \pmod{q_1q_2}$ for even $n$.
\end{cor}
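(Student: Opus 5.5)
The corollary follows by combining the two preceding lemmas with the dichotomy for $n$ in the lemma on the size $n$ of the unicycles. The plan is to locate, for a generator $g$ in a unicycle $C$, the index $j$ (taken mod $n$) such that $-g \in \mathcal{NI}(g^{x_1^j})$.

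\textbf{Membership condition.} Write $-g=g^{1+\frac{p-1}{2}}$. By Lemma 2 and the proof of the reflexivity lemma, $-g\in\mathcal{NI}(g^{x_1^j})$ exactly when $\gcd\big(x_1^j\pm(1+\frac{p-1}{2}),\,p-1\big)>1$. Since $p\equiv 3\pmod 4$, the number $1+\frac{p-1}{2}$ is even, and this reduces modulo $q_1q_2$ to the condition that $x_1^{j}\equiv \pm(1+\tfrac{p-1}{2})^{-1}$ modulo $q_1$ and modulo $q_2$ simultaneously, with suitably chosen signs.

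\textbf{Dichotomy.} The key step is to show that $S$ is \emph{exactly one} of reflexive or symmetric-without-being-reflexive. By the previous lemma, if $S$ is not reflexive then it is symmetric. So the real content is the classification: $S$ is reflexive iff $2^n\equiv 1\pmod{q_1q_2}$, which is the earlier lemma. The corollary should then be read as ``$S$ is (properly) symmetric, i.e.\ not reflexive,'' iff $2^n\not\equiv 1\pmod{q_1q_2}$.

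\textbf{Translating via the unicycle-size lemma.} That lemma says that for even $n$ we have $2^n\equiv\pm1\pmod{q_1q_2}$. So for even $n$, $2^n\not\equiv1$ holds iff $2^n\equiv -1\pmod{q_1q_2}$. For odd $n$, I will argue that $2^n\equiv 1$ is impossible. Suppose $2^n\equiv 1$. Then $x_1^{2n}=4^n\equiv 1$, so the order of $x_1$ modulo $q_1q_2$ divides $2n$. The minimality of $n$ in Theorem 2 (with $x_1^n\equiv\pm1$) then forces $x_1^n\equiv \pm 1$ with $n$ odd, hence $x_1^{n}\equiv 2^{n-1}x_1$. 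Combined with the reflexivity argument, which required $n=2j$ to be even, this contradicts oddness. So reflexivity never occurs for odd $n$, and $S$ is symmetric.

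\textbf{Main obstacle.} The delicate point is the first step: ruling out reflexivity for odd $n$ rigorously. The earlier reflexivity proof derives $x_1^{2j}\equiv1$ and concludes that $n=2j$, and I must check that this derivation indeed forces $n$ to be even. If that derivation is shaky, I would instead directly show that $-g\in\mathcal{NI}(g^{x_1^j})$ requires $x_1^j\equiv\pm 2^{-1}\cdot$(unit) in a way compatible only with $x_1^{2j}\equiv1$. Then I would use that the sign pattern $(+,-)$ modulo $(q_1,q_2)$ of $x_1$ makes an odd power of $x_1$ never congruent to $\pm1$ modulo $q_1q_2$.
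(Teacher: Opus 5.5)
The structure matches the paper's, which gives no separate proof and reads the corollary off three results: the reflexivity lemma, the lemma ``not reflexive $\Rightarrow$ symmetric'', and the unicycle-size lemma. Your reading of ``symmetric'' as ``not reflexive'' is the right one. Taken literally the statement is false: for $p=103$, $S$ is the identity relation, hence symmetric, while $n=8$ and $2^8\equiv 1\pmod{51}$. Your even case is fine.

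\textbf{The gap is the odd case}, which is the only place an argument is actually needed. Your main line leans on ``the reflexivity argument, which required $n=2j$''. That step of the paper is a non sequitur. There $j$ is merely some index with $-g\in\mathcal{NI}(g^{x_1^j})$, and $x_1^{2j}\equiv 1$ alone only gives $n\mid 2j$, which says nothing about the parity of $n$. Moreover, to invoke it you need $S$ reflexive. From $2^n\equiv 1$ you could only get that via the converse direction, and the paper's proof of that direction assumes ``$n$ is even'', which is exactly what is at stake.

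Your fallback is false. Odd powers of $x_1$ \emph{can} be $\pm1$ modulo $q_1q_2$: when $n$ is odd, $x_1^n\equiv\pm1$ by the very definition of $n$. For example, $p=43$, $q_1q_2=21$, $x_1=5$ gives $5^3\equiv -1\pmod{21}$.

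\textbf{The repair} uses only congruences you already wrote down but never combined. For odd $n$ you have $x_1^n\equiv\pm1$ and $x_1^n= x_1\cdot 4^{(n-1)/2}\equiv 2^{n-1}x_1$. If also $2^n\equiv 1$, multiply $2^{n-1}x_1\equiv\pm1$ by $2$ to get $x_1\equiv\pm 2\pmod{q_1q_2}$. This contradicts the non-triviality of $x_1$. So $2^n\not\equiv 1$, and the reflexivity lemma gives non-reflexivity.

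This is the correct form of your sign-pattern idea. Since $x_1\equiv 2$ modulo one of $q_1,q_2$ and $x_1\equiv -2$ modulo the other, $x_1^n\equiv\varepsilon$ with $n$ odd forces $2^n\equiv\varepsilon$ modulo one prime and $2^n\equiv-\varepsilon$ modulo the other. So it is $2^n$, not a power of $x_1$, that can never be $\pm1$.

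Alternatively, you can repair the paper's step directly. Make your membership condition precise: the signs modulo $q_1$ and $q_2$ must be opposite, and $1+\frac{p-1}{2}\equiv 1\pmod{q_1q_2}$. Then reflexivity means $x_1^j\equiv\pm u$ with $u\equiv 1\pmod{q_1}$ and $u\equiv -1\pmod{q_2}$. Since $u\not\equiv\pm1$ and $u^2\equiv 1$, you get $n\mid 2j$ but $n\nmid j$, which forces $n$ to be even.
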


The above properties of $S$ will be used in devising an algorithm 
which verifies for a given triplet whether there exists a prime in $P_3$. The algorithm 
is presented in section 9.
 
From the table, for primes $p=127$, $139$, $283$ and $907$, $n$ is odd, so the relation $S$ is symmetric. 
For $p=103=2\times 3\times 17+1$, $(3\times 17)|2^8-1$ and therefore $S$ is 
reflexive. For $p = 131=2\times 5\times 13+1$, $S$ is symmetric, 
since $n=6$ is even and $(5\times 13)|(2^6+1)$. 

\section{Map T}\label{sec_t}
\noindent For a prime $p\in P_3$, the digraph formed by the sets of missing generators 
of $\mathcal{Z}_p^*$ is described by a triplet $(c,n,e)$ of 
numbers. The triplet $(c,n,e)$ is unique for $p$. Knowing this fact, we define the map $T: P_3 \rightarrow \mathbb{N}^3$ as   
\begin{displaymath}
 T(p) = (c,n,e).
\end{displaymath}
The map $T$ is not one-one. Since $T(71) = T(79) = (1,12,2)$. 
Indeed one can find many such instances. The map $T$ is not onto and also, $Im(T)\subset
 \mathbb{N}^3$. For a triplet $(c,n,e)\in Im(T)$. It is true that $e$ is 
an even number with at most 3 primes factors and at least one of $c,n$ is even.
 The converse is not true. A triplet $(c_1,n_1,e_1)$ with these properties may not belong to $Im(T)$. 
For example, $(4,1,14) \not \in Im(T)$. Triplet information for some primes is presented in the table
below. 
\begin{center}
\begin{tabular}{|r|r|r||r|r|r|}
 \hline
 $p$ & $(c,n,e)$ & $\phi(p-1)$ & $p$ & $(c,n,e)$ & $\phi(p-1)$ \\
\hline
 31  & (1,4,2)  & 8  & 401 & (0,0,0) & 160\\
\hline
 43 & (2,3,2) & 12 & 409 & (2,8,8) & 128\\
\hline 
67 & (1,10,2) & 20 & 439 & (8,9,2) & 144\\
\hline
71 & (1,12,2) & 24 & 491 & (1,12,14) & 168\\
\hline
79 & (1,12,2) & 24 & 509 & (0,0,0) & 252\\
\hline
89 & (0,0,0) & 40 & 521 & (4,6,8) & 192\\
\hline
131 & (4,6,2) & 48 & 541 & (1,4,36) & 144\\
\hline
151 & (1,4,10) & 40 & 599 & (1,132,2) & 264\\
\hline
181 & (1,4,12) & 48 & 607 & (1,100,2) & 200\\
\hline
199 & (1,10,6) & 60 & 619 & (2,51,2) & 204\\
\hline
223 & (1,36,2) & 72 & 673 & (2,3,32) & 192\\
\hline
241 & (1,4,16) & 64 & 701 & (1,12,20) & 240\\
\hline
281 & (1,12,8) & 96 & 709 & (1,58,4) & 232\\
\hline
311 & (3,20,2) & 120 & 797 & (0,0,0) & 356\\
\hline
349 & (1,28,4) & 112 & 821 & (8,10,4) & 320\\
\hline
397 & (1,10,12) & 120 & 953 & (2,24,8) & 384\\
\hline

\end{tabular} 
\end{center}

It is of interest to ask: 
\begin{center}
{\it 
For a given prime $p\in P_3$,  what is $T(p)?$}
 
\end{center}
%\begin{enumerate}
% \item For a given $(c,n,e)$, does there exist a prime $p\in P_3$ %such that $T(p)=(c,n,e)$? 
%\item   
%\end{enumerate}

Computing $T(p)$ requires factoring $p-1$. Conversely, 
the prime factors of $p-1$ can be computed if $T(p)$ 
is known. 
\begin{theorem}\label{thm_eq}
 Computing $T(p)$ is equivalent to factoring $p-1$. 
\end{theorem}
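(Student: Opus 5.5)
We prove the two directions of the equivalence separately. Throughout, $p\in P_3$ is given with $p-1=2^iq_1^{j_1}q_2^{j_2}$, and we count primality tests, gcd's and modular exponentiations as polynomial-time operations in $\log p$.

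\emph{Factoring $p-1$ gives $T(p)$.} Given the factorization, we read off $i,j_1,j_2,q_1,q_2$. Then $e=M_p=2^iq_1^{j_1-1}q_2^{j_2-1}$ follows directly from Theorem 1.1 (equivalently, from the remark after the corollary). Next we compute $x_1$, the odd non-trivial square root of $4$ modulo $q_1q_2$, by the Chinese remainder theorem: combine $2$ modulo $q_1$ with $-2$ modulo $q_2$, and choose the odd one of the pair $x,q_1q_2-x$. By the proof of Theorem 2, $n$ is the least positive integer with $x_1^n\equiv\pm1\pmod{q_1q_2}$, so $n$ is the order of $x_1$ in $(\mathbb{Z}/q_1q_2\mathbb{Z})^*/\{\pm1\}$. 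To compute it, start from $\phi(q_1q_2)=(q_1-1)(q_2-1)$. The factorizations of $q_1-1$ and $q_2-1$ are not automatically known, so we either accept that as an oracle call or compute the order by testing the divisors we obtain. Finally $c=\phi(p-1)/(ne)$. The step needing care is the order computation. If only polynomial time given the factorization of $p-1$ is wanted, I would argue that factoring $q_1-1$ and $q_2-1$ are instances of the same problem, so an oracle for factoring handles them. This makes this direction a reduction to a factoring oracle rather than a pure computation.

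\emph{$T(p)$ gives the factorization of $p-1$.} Given $(c,n,e)$, first strip the power of $2$ from $p-1$ by repeated halving, which yields $i$ and $m=q_1^{j_1}q_2^{j_2}$. Since $e=2^iq_1^{j_1-1}q_2^{j_2-1}$, we get $e/2^i=q_1^{j_1-1}q_2^{j_2-1}$. Then $m/(e/2^i)=q_1q_2$, so the product $q_1q_2$ is known. In addition, $ce\cdot n=\phi(p-1)=2^{i-1}q_1^{j_1-1}q_2^{j_2-1}(q_1-1)(q_2-1)$. Dividing by $e/2$ gives $(q_1-1)(q_2-1)=2cn$. Hence $q_1+q_2=q_1q_2+1-2cn$, and $q_1,q_2$ are the two roots of a quadratic, which we find by an integer square root. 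To recover the exponents $j_1,j_2$, divide $m$ repeatedly by $q_1$ and by $q_2$.

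\emph{Main obstacle.} The second direction is elementary algebra; the only subtlety is the degenerate case $j_1=j_2=1$, where $e/2^i=1$, and even then the formula still works. The real difficulty lies in the first direction, namely getting $n$ efficiently. So I would state the equivalence as polynomial-time reducibility in both directions relative to a factoring oracle, which is invoked recursively on $(q_1-1)(q_2-1)$. This is consistent with the later use of the theorem for RSA numbers.
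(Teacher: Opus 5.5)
Your proof follows the paper's route in both directions. From $(c,n,e)$ you recover $q_1q_2=(p-1)/e$ and $(q_1-1)(q_2-1)$, then solve a quadratic; conversely you compute $e=(p-1)/(q_1q_2)$ directly and obtain $n$ as an order modulo $q_1q_2$ (you take the order of $x_1$ in $(\mathbb{Z}/q_1q_2\mathbb{Z})^*/\{\pm1\}$, while the paper works from the order of $2$). Where you differ, you are the more accurate one. The correct relation is $(q_1-1)(q_2-1)=2cn$; the paper's $2n_1e_1$ already fails for $p=31$, giving $16\neq 8$. You also rightly note that the order computation needs the factorization of $(q_1-1)(q_2-1)$, which knowing $p-1$ does not supply. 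So the converse is really a reduction to a factoring oracle, not the unconditional $O(\log^4 p)$ computation the paper asserts.
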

\noindent Proof:  Suppose $\mathfrak{A}$ is an 
algorithm that outputs $T(p) = (c_1,n_1,e_1)$ for 
a given prime $p\in P_3$. Then, the odd prime factors $r$,$s$ of $p-1$ can 
be computed from the relations: $r\times s = \frac{p-1}{e_1 }$ 
and $(r-1)(s-1) = 2\times n_1 \times e_1$.

Conversely, suppose the prime factors of $p-1$ are known. 
Let $q_1$ and $q_2$ be the prime factors. Then the value of $e$ can be computed as 
$\frac{p-1}{q_1q_2}$. It is known that the variables $c, n$ follow 
 relations:  $c\times n = \frac{1}{2}\phi(q_1q_2)$ and $2^{2n} =1 \pmod{q_1q_2}$. 
So the computation of $n$ requires at most as many operations as 
the computation of the order of $2\pmod {q_1q_2}$ involves. The complexity \cite{ord} of computing the order of an element $\pmod{N}$ 
is $O(\log^4 N)$ when the factorization of $\phi(N)$ is known. 
Thus, the triplet can be computed in $O(\log^4 p)$. \hfill $\Box$  \\

A RSA number is a large integer $N$ that is used as the public parameter in the RSA cryptographic system \cite{rsa}. The security of RSA system is dependent on the difficulty of factoring $N$. The number $N$ is, being the product of two equally sized primes, an 
worst-case instance of Integer Factoring Problem (IFP). 
The best classical technique \cite{nfs} factors $N$ in time sub-exponential in $\log N$. In what follows, we prove that one can factor a given RSA number in time polynomial in $\log N$ using the algorithm $\mathfrak{A}$, under the following number theoretic assumption.  \\

\noindent{\bf Assumption A.} There exists an absolute constant 
$k$ such that, for any odd $N$, 
 the set $\{ 2^iN^j+1: 1\leq i,j< (\log N)^k\}$ contains a prime.

 \begin{lemma}\label{lem_rsa}
 Factors of RSA number $N$ can be computed in time $O(log^{4k+3} N)$.  
\end{lemma}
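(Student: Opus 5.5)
The idea is to reduce factoring $N=q_1q_2$ to one call of $\mathfrak{A}$ on a suitable prime $p\in P_3$, and then recover $q_1,q_2$ from the triplet exactly as in the proof of Theorem \ref{thm_eq}. Since $N$ is odd, Assumption A guarantees a prime of the form $p=2^iN^j+1$ with $1\leq i,j<(\log N)^k$. Any such prime satisfies $p-1=2^iq_1^jq_2^j$, so $p\in P_3$ with $j_1=j_2=j$. The algorithm is therefore as follows. Enumerate all pairs $(i,j)$ with $1\le i,j<(\log N)^k$, test each candidate $2^iN^j+1$ for primality, stop at the first prime $p$, and call $\mathfrak{A}$ on it.

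Once $T(p)=(c,n,e)$ is available, we use $e=2^iq_1^{j-1}q_2^{j-1}$, which is Lemma \ref{lem_y} / Theorem 1.1 specialised to $P_3$. This gives $(p-1)/e=q_1q_2=N$, which we already know, so the product relation alone is useless. The information must come from the cycle data instead. We use $c\cdot n=\frac12\phi(q_1q_2)$, which follows from $c\,n\,e=\phi(p-1)=2^{i-1}q_1^{j-1}q_2^{j-1}(q_1-1)(q_2-1)$. From this we get $\phi(N)=(q_1-1)(q_2-1)=2cn$, and hence $q_1+q_2=N+1-2cn$. Then $q_1,q_2$ are the roots of the quadratic $X^2-(N+1-2cn)X+N$, found with one integer square root.

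For the cost, there are at most $(\log N)^{2k}$ candidates. Each has bit length $O(i+j\log N)=O(\log^{k+1}N)$. A deterministic or probabilistic polynomial primality test on an $L$-bit number costs roughly $O(L^3)$ with Miller--Rabin-style exponentiation. The counting should be arranged so that the total is $O(\log^{4k+3}N)$. Concretely, $(\log N)^{2k}$ candidates times about $(\log^{k+1}N)^{2}\cdot\log N$ per test (modular exponentiation with fast multiplication) gives $\log^{4k+3}N$. The final quadratic step is $O(\log^2 N)$, which is negligible. The call to $\mathfrak{A}$ is counted as unit cost, or at least as not included in the bound, as in an oracle reduction.

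The main obstacle is the complexity bookkeeping. The exponent $4k+3$ depends on which primality-test cost is assumed and on whether the search order is chosen sensibly. If the cost is charged per candidate at the maximal size $\log p\approx(\log N)^{k+1}$, the claimed bound requires a per-test cost of order $(\log p)^2\log N$, and I would justify this by citing a fast probabilistic test. A second point needs care. We must check that $\mathfrak{A}$'s output determines $\phi(N)$ rather than only $N$; this uses the identity $c\,n\,e=\phi(p-1)$ and the explicit value of $e$. It would also fail if $q_1=q_2$, but RSA moduli have distinct factors.
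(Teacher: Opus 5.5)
Your proof is correct and uses the paper's reduction: Assumption A yields a prime $p=2^iN^j+1\in P_3$ by primality testing, you call $\mathfrak{A}$ once, and you recover the factors as in Theorem \ref{thm_eq}. You differ in two details, and both matter.

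\emph{Recovery step.} The paper only cites Theorem \ref{thm_eq}. Its proof pairs $rs=(p-1)/e_1$ with $(r-1)(s-1)=2n_1e_1$. As you observe, the first relation just returns $N$ here. The second should read $2c_1n_1$, which is your $\phi(N)=2cn$, derived from $cne=\phi(p-1)$ and $e=(p-1)/(q_1q_2)$. That value of $e$ comes from Theorem 1.1; Lemma \ref{lem_y} concerns the sets $\NI$.

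\emph{Running time.} The paper charges $O(\log^3 b)$ per primality test and bounds the total by $(\log N)^3\sum_{1\le i,j<(\log N)^k}(i+j)^3$. That double sum is $\Theta((\log N)^{5k})$, so the paper's own accounting only gives $O(\log^{5k+3}N)$. Your Miller--Rabin test with fast multiplication costs $O(L^2\log L)$ for $L=O(\log^{k+1}N)$ bits per candidate. That is what actually reaches $O(\log^{4k+3}N)$, and in fact $O(\log^{4k+2}N\log\log N)$. The price is a probabilistic test with a fixed number of rounds. A false positive is harmless: check that the roots of $X^2-(N+1-2cn)X+N$ are integers with product $N$, and otherwise resume the search.

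You are also right that the bound only makes sense relative to unit-cost oracle access to $\mathfrak{A}$, which the lemma statement leaves implicit.
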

\begin{proof}
Suppose the assumption A holds true. Then, for a given RSA number $N$, one can search for a prime 
$p\in P_3$ from the set of numbers of the form $2^iN^j+1$ by running PRIMALITY testing algorithm \cite{rabin}. The complexity of verifying if a number $b$ is $O(\log^3 b)$. So, the complexity of finding a prime $p$ is 
\begin{displaymath}
 \sum_{1\leq i,j< (\log N)^k} (\log(2^iN^j+1))^3
 < (\log N)^3 \sum_{1\leq i,j< (\log N)^k} (i+j)^3,
\end{displaymath}
which is roughly $O((\log N)^{4k+3})$. 
Now, the prime $p$ is input to the algorithm $\mathfrak{A}$, 
whose output is $T(p) = (c_1,n_1,e_1)$. 
By Theorem \ref{thm_eq}, the prime factors of $N$ can be computed. \end{proof}

\subsection*{On Assumption A}  

The result in Lemma \ref{lem_rsa} is proved using Assumption A, which is left  
as open issue to be resolved. Our small empirical data, for some randomly chosen odd $N$, suggests that the set $\{ 2^iN^j+1: 1\leq i,j \leq \lceil 5*log_2N 
\rceil \}$ contains a prime. 

\begin{acknowledgement}
Authors would like to gratefully acknowledge the support of National Board for Higher Mathematics, Govt. of India (Sanction order No. 02011/15/2022 NBHM (R.P)/R\&D II/10504)
\end{acknowledgement}

\end{document}